\definecolor{darkgreen}{rgb}{0,0.7,0}
\definecolor{darkred}{rgb}{0.9,0,0}
\definecolor{darkblue}{rgb}{0,0,0.9}
\newlength{\shorter}
\newcommand{\mynote}[1]{{\noindent\color{darkred}\textbf{[#1]}}}
\newcommand{\lie}[3]{\def\test{#2}\def\tst{G}\ifx\test\tst{{}^{#1}#2_{#3}}
\else{{}^{#1}\!#2_{#3}}\fi}
\newcommand{\4}[1]{\widebar{#1}}
\newcommand{\5}[1]{\widehat{#1}}
\newcommand{\9}[1]{{}^{#1}\!}   
\let\oldcirc=\circ
\renewcommand{\circ}{\mathchoice
    {\mathbin{\scriptstyle\oldcirc}}{\mathbin{\scriptstyle\oldcirc}}
    {\mathbin{\scriptscriptstyle\oldcirc}}
    {\mathbin{\scriptscriptstyle\oldcirc}}}
\newlength{\upto}\newlength{\dnto}
\newcommand{\I}[2]{\addtolength{\upto}{#1pt}\addtolength{\dnto}{#2pt}%
{\vrule height\upto depth\dnto width 0pt}}
\numberwithin{equation}{section}
\def\theequation{\arabic{equation}}
\let\endpf=\endproof
\renewcommand{\endproof}{\endpf\setcounter{equation}{0}}
\mathchardef\cdot="0201
\def\beq#1\eeq{\begin{equation*}#1\end{equation*}}
\def\beqq#1\eeqq{\begin{equation}#1\end{equation}}
\renewcommand{\:}{\colon}   
\newcommand{\widebar}[1]{\overset{\mskip2mu\hrulefill\mskip2mu}{#1}
		\vphantom{#1}}
\newcommand{\longline}{\bigskip\centerline{\hbox to 5cm{\hrulefill}}\bigskip}
\newcommand{\mxfourb}[8]{#1&#2&#3&#4\\#5&#6&#7&#8\end{smallmatrix}\right)}
\newcommand{\mxfoura}[8]{\left(\begin{smallmatrix}#1&#2&#3&#4\\#5&#6&#7&#8\\}
\newcommand{\Mxfourb}[8]{#1&#2&#3&#4\\#5&#6&#7&#8\end{pmatrix}}
\newcommand{\Mxfoura}[8]{\begin{pmatrix}#1&#2&#3&#4\\#5&#6&#7&#8\\}
\def\trp[#1,#2,#3]{[\hskip-1.5pt[#1,#2,#3]\hskip-1.5pt]}
\DeclareMathAlphabet\EuR{U}{eur}{m}{n}
\SetMathAlphabet\EuR{bold}{U}{eur}{b}{n}
\newcommand{\higherlim}[2]{\displaystyle\setbox1=\hbox{\rm lim}
	\setbox2=\hbox to \wd1{\leftarrowfill} \ht2=0pt \dp2=-1pt
	\setbox3=\hbox{$\scriptstyle{#1}$}
	\def\test{#1}\ifx\test\empty
	\mathop{\mathop{\vtop{\baselineskip=5pt\box1\box2}}}\nolimits^{#2}
	\else
	\ifdim\wd1<\wd3
	\mathop{\hphantom{^{#2}}\vtop{\baselineskip=5pt\box1\box2}^{#2}}_{#1}
	\else
	\mathop{\mathop{\vtop{\baselineskip=5pt\box1\box2}}_{#1}}%
	\nolimits^{#2}
	\fi\fi}
\newcommand{\higherlimm}[2]{\setbox1=\hbox{\rm lim}
	\setbox2=\hbox to \wd1{\leftarrowfill} \ht2=0pt \dp2=-1pt
	\mathop{\mathop{\vtop{\baselineskip=5pt\box1\box2}}}\limits_{#1}
	\nolimits^{#2}}
\newcounter{let} \setcounter{let}{0}
\loop\stepcounter{let}
\edef\csname cal\alph{let}\endcsname%
\newcommand{\tdef}[2][]{\expandafter\newcommand\csname#2\endcsname%
{#1\textup{#2}}}
\newcommand{\fdef}[1]{\expandafter\newcommand\csname#1\endcsname%
{\mathfrak{#1}}}
\newcommand{\bbdef}[1]{\expandafter\newcommand%
\csname#1\endcsname{\mathbb{#1}}}
\newcommand{\itdef}[1]{\expandafter\newcommand\csname#1\endcsname%
{\textit{#1}}}
\newcommand{\gen}[1]{\langle{#1}\rangle}
\newcommand{\Gen}[1]{\bigl\langle{#1}\bigr\rangle}
\let\nsg=\normal
\newcommand{\syl}[2]{\textup{Syl}_{#1}(#2)}
\newcommand{\sylp}[1]{\syl{p}{#1}}
\newcommand{\autf}{\Aut_{\calf}}
\newcommand{\outf}{\Out_{\calf}}
\newcommand{\homf}{\Hom_{\calf}}
\newcommand{\isof}{\Iso_{\calf}}
\newcommand{\sminus}{\smallsetminus}
\let\til=\widetilde
\let\too=\longrightarrow
\newcommand{\longleft}[1]{\;{\leftarrow%
\count255=0 \loop \mathrel{\mkern-6mu}%
    \relbar\advance\count255 by1\ifnum\count255<#1\repeat}\;}
\newcommand{\longright}[1]{\;{\count255=0 \loop \relbar\mathrel{\mkern-6mu}%
    \advance\count255 by1\ifnum\count255<#1\repeat\rightarrow}\;}
\newcommand{\Right}[2]{\overset{#2}{\longright#1}}
\newcommand{\RIGHT}[3]{\mathrel{\mathop{\kern0pt\longright#1}
	\limits^{#2}_{#3}}}
\newcommand{\LEFT}[3]{\mathrel{\mathop{\kern0pt\longleft#1}\limits^{#2}_{#3}}
}
\newcommand{\longleftright}[1]{\;{\leftarrow\mathrel{\mkern-6mu}%
    \count255=0\loop\relbar\mathrel{\mkern-6mu}%
    \advance\count255 by1\ifnum\count255<#1\repeat\rightarrow}\;} 
\newcommand{\onto}[1]{\;{\count255=0 \loop \relbar\joinrel
    \advance\count255 by1
    \ifnum\count255<#1 \repeat \twoheadrightarrow}\;}
\newcommand{\RLEFT}[3]{\mathrel{%
   \mathop{\vcenter{\baselineskip=0pt\hbox{$\kern0pt\longright#1$}%
   \hbox{$\kern0pt\longleft#1$}}}\limits^{#2}_{#3}}}
\newcommand{\RRIGHT}[3]{\mathrel{%
   \mathop{\vcenter{\baselineskip=0pt\hbox{$\kern0pt\longright#1$}%
   \hbox{$\kern0pt\longright#1$}}}\limits^{#2}_{#3}}}
\numberwithin{table}{section}
\renewenvironment{enumerate}[1][]
{\begin{enumerat}[#1]\setlength{\itemsep}{6pt}\setlength{\topsep}{20pt}
\setlength{\partopsep}{20pt}\setlength{\parsep}{20pt}}
{\end{enumerat}}
\renewenvironment{itemize}
{\begin{itemiz}\setlength{\itemsep}{6pt}\setlength{\itemindent}{-20pt}}
{\end{itemiz}}
\newenvironment{enuma}{\begin{enumerate}[{\rm(a) }]}{\end{enumerate}}
\newenvironment{enumi}{\begin{enumerate}[{\rm(i) }]}{\end{enumerate}}
\newtheorem{Thm}{Theorem}[section]
\newtheorem{Prop}[Thm]{Proposition}
\newtheorem{Cor}[Thm]{Corollary}
\newtheorem{Lem}[Thm]{Lemma}
\newtheorem{Conj}[Thm]{Conjecture}
\theoremstyle{definition}
\newtheorem{Defi}[Thm]{Definition}
\theoremstyle{remark}
\title{Reductions to simple fusion systems}
\author{Bob Oliver}
\address{Université Paris 13, Sorbonne Paris Cité, LAGA, UMR 7539 du CNRS, 
99, Av. J.-B. Clément, 93430 Villetaneuse, France.}
\email{bobol@math.univ-paris13.fr}
\thanks{B. Oliver is partially supported by UMR 7539 of the CNRS}
\subjclass[2000]{Primary 20E25. Secondary 20D20, 20D05, 20D25, 20D45}
\keywords{Fusion systems, Sylow subgroups, finite simple groups, 
generalized Fitting subgroup, $p$-solvable groups}
\begin{document}

\begin{abstract} We prove that if $\cale\nsg\calf$ are saturated fusion 
systems over $p$-groups $T\nsg S$, such that $C_S(\cale)\le T$, and either 
$\autf(T)/\Aut_\cale(T)$ or $\Out(\cale)$ is $p$-solvable, then $\calf$ can 
be ``reduced'' to $\cale$ by alternately taking normal subsystems of 
$p$-power index or of index prime to $p$. In particular, this is the case 
whenever $\cale$ is simple and ``tamely realized'' by a known simple group 
$K$. This answers a question posed by Michael Aschbacher, and is useful 
when analyzing involution centralizers in simple fusion systems, in 
connection with his program for reproving parts of the classification of 
finite simple groups by classifying certain 2-fusion systems. 
\end{abstract}

\maketitle

When $p$ is a prime and $S$ is a finite $p$-group, a \emph{saturated fusion 
system} over $S$ is a category whose objects are the subgroups of $S$, 
whose morphisms are injective group homomorphisms between the subgroups, 
and which satisfies a certain list of axioms motivated by the Sylow 
theorems for finite groups (Definition \ref{d:sfs}). For example, when $G$ 
is a finite group and $S\in\sylp{G}$, the \emph{$p$-fusion system} of 
$G$ is the category $\calf_S(G)$ whose objects are the subgroups of $S$, 
and where for each $P,Q\le S$, $\Hom_{\calf_S(G)}(P,Q)$ is the set of those 
homomorphisms induced by conjugation in $G$.

Normal fusion subsystems of a saturated fusion system are defined by 
analogy with normal subgroups of a group (Definition \ref{d:E<|F}). Among 
the normal subsystems, we look at two particular classes: those of index 
prime to $p$ (defined over the same Sylow subgroup), and those of $p$-power 
index (see the discussions before and after Lemma \ref{l:O^p(F)}). A 
natural question arises: when $\cale\nsg\calf$, under what conditions can 
$\calf$ be ``reduced'' to $\cale$ via a sequence of steps, where one 
alternates taking normal subsystems of $p$-power index and normal 
subsystems of index prime to $p$? 

Our main theorem (Theorem \ref{solv}) says that if $\cale\nsg\calf$ are 
saturated fusion systems over $p$-groups $T\nsg S$, such that 
$C_S(\cale)\le T$, and either $\autf(T)/\Aut_\cale(T)$ or $\Out(\cale)$ is 
$p$-solvable, then $\calf$ can be reduced to $\cale$ in the above sense. In 
particular, if $\cale$ is the fusion system of a known finite simple group 
$K$, and is ``tamely realized'' by $K$ in the sense that $\Out(K)$ surjects 
onto $\Out(\cale)$ (see Section 2), then $\Out(\cale)$ is solvable 
since $\Out(K)$ is solvable by the Schreier conjecture, and hence $\calf$ 
reduces to $\cale$. 

This paper was motivated by a question posed by Michael Aschbacher. The 
above situation arises frequently when analyzing centralizers of 
involutions in simple fusion systems. If $\calf$ is the centralizer of 
an involution and $\cale=F^*(\calf)$ denotes the generalized Fitting 
subsystem of $\calf$ (see \cite[Chapter 9]{A-gfit}), then the hypothesis 
$C_S(\cale)\le T$ always holds, and $\Out(\cale)$ is solvable by Schreier's 
conjecture if $\cale/Z(\cale)$ is tamely realized by a known simple group 
$K$. Hence in this situation, Theorem \ref{solv} together with results in 
\cite{AOV1} imply that $\calf$ itself is realized by a certain extension of 
$K$. (See Corollary \ref{c:F*(F)<|F} for a slightly more general situation 
where this applies.)

Throughout the paper, all $p$-groups are assumed to be finite. Composition 
is always taken from right to left.

The author would like to thank the Universitat Aut\`onoma de Barcelona for 
its hospitality while much of this paper was being written.

\section{Normal subsystems with $p$-solvable quotient}

Since the details of the definition of a saturated fusion system play an 
important role in the proofs here, we begin by recalling some of this 
terminology. When $G$ is a group and $P,Q\le G$ are subgroups, $\Inj(P,Q)$ 
denotes the set of injective homomorphisms $P\too Q$, and 
$\Hom_G(P,Q)$ is the set of homomorphisms of the form $(x\mapsto gxg^{-1})$ 
for $g\in G$. A \emph{fusion system} over a $p$-group $S$ is 
a category $\calf$ where $\Ob(\calf)$ is the set of subgroups of $S$, and 
where for each $P,Q\le S$, 
\begin{enumerate}[\rm(i) ]
\item $\Hom_S(P,Q)\subseteq\homf(P,Q) \subseteq\Inj(P,Q)$, and 
\item $\varphi\in\homf(P,Q)$ implies $\varphi^{-1}\in\homf(\varphi(P),P)$. 
\end{enumerate}
By analogy with the terminology for groups, we say that $P,Q\le S$ are 
\emph{$\calf$-conjugate} if they are isomorphic in $\calf$, and let 
$P^\calf$ denote the set of subgroups $\calf$-conjugate to $P$.

\begin{Defi}[{\cite{RS}}] \label{d:sfs}
Let $\calf$ be a fusion system over a $p$-group $S$.
\begin{enuma} 
\item A subgroup $P\le S$ is \emph{fully automized in $\calf$} if 
$\Aut_S(P)\in\sylp{\autf(P)}$.

\item A subgroup $P\le S$ is \emph{receptive in $\calf$} if for each $Q\in 
P^\calf$ and each $\varphi\in\isof(Q,P)$, $\varphi$ extends to a 
homomorphism $\4\varphi\in\homf(N_\varphi,S)$, where
	\[ N_\varphi = \bigl\{ g\in N_S(Q) \,\big|\, \varphi 
	c_g\varphi^{-1}\in\Aut_S(P) \bigr\}. \]

\item The fusion system $\calf$ is \emph{saturated} if each 
$\calf$-conjugacy class of subgroups of $S$ contains at least one member 
which is fully automized and receptive in $\calf$. 

\item When $\calh$ is a set of subgroups of $\calf$ closed under 
$\calf$-conjugacy, we say that $\calf$ is \emph{$\calh$-saturated} if each 
member of $\calh$ is $\calf$-conjugate to a subgroup which is fully 
automized and receptive in $\calf$. We say that $\calf$ is 
\emph{$\calh$-generated} if each morphism in $\calf$ is a composite of 
restrictions of morphisms between members of $\calh$. 

\item When $X$ is a set of injective homomorphisms between subgroups of 
$S$, $\gen{X}$ denotes the fusion system over $S$ generated by $X$: 
the smallest fusion system which contains $X$. When $\calf_0$ 
is a category whose objects are subgroups of $S$ and whose morphisms are 
injective homomorphisms, we write $\gen{\calf_0}=\gen{\Mor(\calf_0)}$.

\end{enuma}
\end{Defi}

We next recall some of the terminology for subgroups in a fusion system. 
When $\calf$ is a fusion system over $S$ and $P\le S$, we write 
$\outf(P)=\autf(P)/\Inn(P)$.

\begin{Defi} \label{d:subgroups}
Let $\calf$ be a fusion system over a $p$-group $S$, and let $P\le S$ be a 
subgroup. Then 
\begin{enuma} 
\item $P$ is \emph{fully centralized (fully normalized) in} $\calf$ if 
$|C_S(P)|\ge|C_S(Q)|$ ($|N_S(P)|\ge|N_S(Q)|$) for each $Q\in P^\calf$;

\item $P$ is \emph{$\calf$-centric} if $C_S(Q)\le Q$ for each $Q\le P$;

\item $P$ is \emph{$\calf$-radical} if $O_p(\outf(P))=1$; and 

\item $P$ is \emph{strongly closed in $\calf$} if $\varphi(P_0)\le P$ for each 
$P_0\le P$ and each $\varphi\in\homf(P_0,S)$.
\end{enuma}
We also let $\calf^{cr}\subseteq\calf^c$ denote the sets of subgroups of 
$S$ which are $\calf$-centric and $\calf$-radical, or $\calf$-centric, 
respectively.
\end{Defi}

The following lemma describes the relations between some of these 
conditions on subgroups. Point (a) and (b) are due to Roberts and 
Shpectorov \cite{RS} and are also shown in \cite[Lemma I.2.6(c)]{AKO}, 
while (c) is immediate from the definitions.

\begin{Lem} \label{f.c.=>receptive}
In a saturated fusion system $\calf$ over a $p$-group $S$, for each 
subgroup $P\le S$,
\begin{enuma} 
\item $P$ is fully centralized if and only if $P$ is receptive; 
\item $P$ is fully normalized if and only if $P$ is fully automized and 
receptive; and
\item if $P$ is $\calf$-centric, then $P$ is fully centralized and hence 
receptive.
\end{enuma}
\end{Lem}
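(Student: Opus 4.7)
The plan is to exploit the saturation axiom as stated: each $\calf$-conjugacy class contains a member $Q$ that is both fully automized and receptive. All three parts then follow by comparing an arbitrary subgroup $P$ to such a representative $Q$ via the extension property built into receptivity.

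Part~(c) is pure bookkeeping. If $P$ is $\calf$-centric then for each $P'\in P^\calf$, $C_S(P')\le P'$ combined with $Z(P')\le C_S(P')$ forces $C_S(P')=Z(P')$; since $|Z(P')|$ is an $\calf$-isomorphism invariant, $|C_S(P')|$ is constant on $P^\calf$, and $P$ is fully centralized. The ``hence receptive'' part then follows from~(a). The easy directions of (a) and (b) are likewise short: assuming $P$ is receptive (resp.\ fully automized and receptive), take $R\in P^\calf$ and any isomorphism $\varphi\in\isof(R,P)$. In the fully-automized case, precompose $\varphi$ with an element of $\autf(P)$ so that $\varphi\Aut_S(R)\varphi^{-1}\le\Aut_S(P)$, placing $N_S(R)$ inside $N_\varphi$; in the centralizer case $C_S(R)\le N_\varphi$ automatically, since a centralizing element transports to the identity automorphism. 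Receptivity of $P$ then extends $\varphi$, and the image normalizes (resp.\ centralizes) $P$, yielding the required inequality.

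For the hard directions, the strategy is to build a single $\calf$-isomorphism $\bar\alpha$ connecting a neighborhood of $P$ to one of $Q$. Fix $Q\in P^\calf$ fully automized and receptive (by saturation); the previous paragraph shows $Q$ is automatically fully normalized and fully centralized. Pick any isomorphism $\alpha\colon P\to Q$ and modify it by an element of $\autf(Q)$ so that $\alpha\Aut_S(P)\alpha^{-1}\le\Aut_S(Q)$---feasible because $\Aut_S(Q)$ is Sylow in $\autf(Q)$. Receptivity of $Q$ then extends $\alpha$ to $\bar\alpha\colon N_S(P)\to N_S(Q)$. If $P$ is fully normalized, the equal orders $|N_S(P)|=|N_S(Q)|$ force $\bar\alpha$ to be an isomorphism, which transports $\Aut_S(P)$ onto $\Aut_S(Q)$ (giving fully automized) and $C_S(P)$ onto $C_S(Q)$ (giving fully centralized). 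If only $P$ is fully centralized, an order count still yields $\bar\alpha(C_S(P))=C_S(Q)$.

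The main obstacle is completing the proof that fully centralized implies receptive. Given $R\in P^\calf$ and $\psi\in\isof(R,P)$, the natural candidate extension is $\bar\alpha^{-1}\tau$, where $\tau$ is the extension of $\alpha\psi$ to $N_\psi$ obtained from receptivity of $Q$; the difficulty is that $\bar\alpha^{-1}$ is only defined on $\bar\alpha(N_S(P))\le N_S(Q)$, which may be strictly smaller than $N_S(Q)$, so one must verify $\tau(N_\psi)\le\bar\alpha(N_S(P))$. The key identification, proven by an elementary argument using the already-established $C_S(Q)=\bar\alpha(C_S(P))$, is that $\bar\alpha(N_S(P))$ coincides with the set of $h\in N_S(Q)$ whose conjugation on $Q$ lies in $\alpha\Aut_S(P)\alpha^{-1}$; since our choice of $\alpha$ guarantees $c_{\tau(g)}|_Q\in\alpha\Aut_S(P)\alpha^{-1}$ for all $g\in N_\psi$, the desired inclusion holds. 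This completes~(a), and the remaining direction of~(b) follows because fully normalized forces fully centralized via the isomorphism $\bar\alpha$ from the previous paragraph.
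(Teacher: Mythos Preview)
Your proof is correct. The paper itself does not prove parts (a) and (b): it simply cites Roberts--Shpectorov \cite{RS} and \cite[Lemma I.2.6(c)]{AKO}, and remarks that (c) is immediate from the definitions. What you have written is essentially the Roberts--Shpectorov argument that those references contain, so in that sense you have reconstructed the cited proof rather than diverged from it. Your handling of the delicate point in (a)---showing $\tau(N_\psi)\le\bar\alpha(N_S(P))$ via the identification of $\bar\alpha(N_S(P))$ with $\{h\in N_S(Q):c_h|_Q\in\alpha\Aut_S(P)\alpha^{-1}\}$, which in turn rests on $\bar\alpha(C_S(P))=C_S(Q)$---is exactly the crux of their proof and is carried out correctly.
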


As described in the introduction, normal fusion subsystems play a central 
role here.

\begin{Defi} \label{d:E<|F}
Let $\calf$ be a saturated fusion system over a $p$-group $S$, and let 
$\cale\le\calf$ be a saturated fusion subsystem over $T\le S$. 
The subsystem $\cale$ is \emph{normal} in $\calf$ ($\cale\nsg\calf$) if 
\begin{itemize} 
\item $T$ is strongly closed in $\calf$ (in particular, $T\nsg S$);

\item (invariance condition) each $\alpha\in\autf(T)$ is fusion preserving 
with respect to $\cale$ (i.e., extends to $(\alpha,\5\alpha)\in\Aut(\cale)$); 

\item (Frattini condition) for each $P\le T$ and each 
$\varphi\in\homf(P,T)$, there are $\alpha\in\autf(T)$ and 
$\varphi_0\in\Hom_\cale(P,T)$ such that $\varphi=\alpha\circ\varphi_0$; and 

\item (extension condition) each $\alpha\in\Aut_\cale(T)$ extends to some 
$\4\alpha\in\autf(TC_S(T))$ such that $[\4\alpha,C_S(T)]\le Z(T)$.
\end{itemize}
\end{Defi}

Finally, we will frequently need to use the following version of Alperin's 
fusion theorem for fusion systems. This is the version shown in 
\cite[Theorem A.10]{BLO2}. For a stronger version due to Puig, see, e.g., 
\cite[Theorems I.3.5--6]{AKO}.

\begin{Thm} \label{AFT}
If $\calf$ is a saturated fusion system over a $p$-group $S$, then 
	\[ \calf=\Gen{\autf(P) \,\big|\, \textup{$P\le S$ is 
	$\calf$-centric, $\calf$-radical, and fully normalized in $\calf$}}. 
	\]
Thus each morphism in $\calf$ is a composite of restrictions of 
$\calf$-automorphisms of such subgroups.
\end{Thm}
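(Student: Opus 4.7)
The plan follows the standard Alperin-type argument for fusion systems: a reduction to isomorphisms into fully normalized targets, followed by downward induction on subgroup order, using receptivity to extend morphisms to strictly larger domains whenever the source fails any of the three conditions (centric, radical, fully normalized).

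First I reduce to the claim that every $\calf$-isomorphism $\varphi\colon P\to P'$ with $P'$ fully normalized is a composite of restrictions of $\autf(Q)$'s for $Q$ that is $\calf$-centric, $\calf$-radical, and fully normalized. Any morphism in $\calf$ is an $\calf$-isomorphism onto its image composed with an inclusion; inclusions are restrictions of the identity automorphism of $S$ (and $S$ itself will turn out to be in the generating set); and given an arbitrary $\calf$-isomorphism $\varphi\colon P\to P'$, saturation provides a fully normalized $R\in P^\calf$ together with an $\calf$-isomorphism $\psi\colon P'\to R$, so that $\varphi=\psi^{-1}\circ(\psi\circ\varphi)$ exhibits $\varphi$ as a composite of $\calf$-isomorphisms into $R$.

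Next I induct downward on $|P|$. The base case is $P=S$: here $\varphi\in\autf(S)$, the group $S$ is fully normalized and $\calf$-centric (since $C_S(S)=Z(S)\le S$), and $\calf$-radical because full automization of $S$ gives $\Inn(S)=\Aut_S(S)\in\sylp{\autf(S)}$, so $|\outf(S)|$ is coprime to $p$ and $O_p(\outf(S))=1$; thus $\varphi$ lies directly in the generating set. For the inductive step I establish the dichotomy: either $P$ itself is $\calf$-centric, $\calf$-radical, and fully normalized (in which case, after post-composing with some $\alpha\in\autf(R)$ handled at the same level of the induction to arrange $P=P'$, we have $\varphi\in\autf(P)$ directly), or else $\varphi$ extends to an $\calf$-morphism on some $\5P>P$, which is handled by the inductive hypothesis and from which $\varphi$ is recovered by restriction.

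The heart of the proof is this extension step. Since $P'$ is fully normalized it is receptive by Lemma~\ref{f.c.=>receptive}(b), so applying receptivity to $\varphi^{-1}\colon P'\to P$ extends $\varphi^{-1}$ to $N_{\varphi^{-1}}=\{g\in N_S(P')\mid \varphi^{-1}c_g\varphi\in\Aut_S(P)\}$, hence $\varphi$ extends to a subgroup $\5P\le N_S(P)$ containing $P$. If $P$ is not $\calf$-centric then $C_S(P)\not\le P$, and these centralizer elements act trivially on the image side and lie in $N_{\varphi^{-1}}$ beyond $P'$, supplying elements of $\5P\sminus P$. If $P$ is not $\calf$-radical then $O_p(\outf(P))\ne1$ lifts to a normal $p$-subgroup of $\autf(P)$ strictly containing $\Inn(P)$; transporting this across $\varphi$ and using the full automization of $P'$, so that $\Aut_S(P')\in\sylp{\autf(P')}$, places the transported $p$-group inside $\Aut_S(P')$ after a Sylow conjugation, producing $g\in N_S(P')\sminus P'$ lying in $N_{\varphi^{-1}}$ and enlarging $\5P$. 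The main obstacle is precisely this radical case: the Sylow conjugation must be arranged compatibly with the defining condition $\varphi^{-1}c_g\varphi\in\Aut_S(P)$, so that the transported $p$-elements really lie in $\Aut_S(P)$ rather than in some other Sylow $p$-subgroup of $\autf(P)$. This synchronization of the two separate receptivity/Sylow conditions is the routine-but-delicate technical core of Alperin's theorem for fusion systems.
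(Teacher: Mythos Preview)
The paper does not give its own proof of this theorem; it is stated with a citation to \cite[Theorem~A.10]{BLO2} (and to \cite[Theorems~I.3.5--6]{AKO} for Puig's version). So there is no paper proof to compare against, and I will assess your sketch on its own merits.

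Your overall strategy (downward induction on $|P|$, reducing to isomorphisms with fully normalized target, and using receptivity to extend) is the right one, but the execution has several genuine gaps.

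First, you apply receptivity in the wrong direction. You have $P'$ fully normalized, hence receptive; receptivity of $P'$ lets you extend $\varphi\colon P\to P'$ to $N_\varphi\subseteq N_S(P)$, not $\varphi^{-1}\colon P'\to P$ to $N_{\varphi^{-1}}\subseteq N_S(P')$. Extending $\varphi^{-1}$ would require $P$ to be receptive, which you have not established.

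Second, the claim ``$P$ not $\calf$-centric $\Rightarrow C_S(P)\nleq P$'' is false: $\calf$-centricity is a condition on the whole $\calf$-conjugacy class, and its failure only guarantees $C_S(Q)\nleq Q$ for \emph{some} $Q\in P^\calf$. What is true (and what the standard proof uses) is that the fully centralized representative $P'$ satisfies $C_S(P')\nleq P'$ when the class is not centric.

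Third, and most importantly, the structure of the induction step is not as you describe. The standard argument does not try to show $N_\varphi>P$ directly in the non-centric or non-radical cases. Instead, one first uses that $\Aut_S(P')\in\sylp{\autf(P')}$ to choose $\chi\in\autf(P')$ with $\chi\varphi\,\Aut_S(P)\,(\chi\varphi)^{-1}\le\Aut_S(P')$; then $N_{\chi\varphi}=N_S(P)>P$, so $\chi\varphi$ extends and is handled by induction. It remains to show that $\chi\in\autf(P')$ lies in the generated subsystem. \emph{This} is where the centric and radical conditions enter, and they concern $P'$, not $P$: if $P'$ is centric and radical (it is already fully normalized), then $\chi$ is a generator; otherwise one shows $N_\chi>P'$ (using $C_S(P')\nleq P'$ in the non-centric case, or the containment of the normal $p$-subgroup $K\le\Aut_S(P')$ in the non-radical case, which gives $\chi K\chi^{-1}=K\le\Aut_S(P')$ and hence $N_\chi\ge\{g:c_g\in K\}>P'$). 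Your ``synchronization'' difficulty in the radical case is an artifact of trying to run both Sylow conditions simultaneously on $\varphi$ rather than separating off $\chi$; once separated, no synchronization is needed because $K$ is \emph{normal} and hence lies in every Sylow $p$-subgroup.

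Finally, your dichotomy ``either $P$ is centric, radical, and fully normalized, or $\varphi$ extends'' is not the right split, and the parenthetical ``post-composing with some $\alpha\in\autf(R)$\dots to arrange $P=P'$'' does not make sense: composing with an automorphism of $P'$ still gives a map $P\to P'$.
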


If $\cale\nsg\calf$ are saturated fusion systems, then $\cale$ has 
\emph{index prime to $p$} in $\calf$ if they are both over the same 
$p$-group $S$, and for each $P\le S$, $\Aut_\cale(P)\ge O^{p'}(\autf(P))$. 
Let $O^{p'}(\calf)\nsg\calf$ be the smallest normal subsystem of index 
prime to $p$ in $\calf$ \cite[Theorem 5.4]{BCGLO2}. We first fix some tools 
for constructing normal subsystems of this type. The following lemma is 
basically the same as Theorem I.7.7 in \cite{AKO}, but stated in a slightly 
more general setting.

\begin{Lem} \label{Ker(chi)}
Let $\calf$ be a saturated fusion system over a $p$-group $S$. Let 
$\calh\subseteq\calf^c$ be a nonempty set of $\calf$-centric subgroups of 
$S$ such that 
\begin{enumi} 
\item $\calh$ is closed under $\calf$-conjugacy and overgroups; and
\item for each $P\in\calf^c\sminus\calh$, there is $P^*\in P^\calf$ such 
that $\Out_S(P^*)\cap O_p(\outf(P^*))\ne1$.
\end{enumi}
Let $\calf^*\subseteq\calf$ be the full subcategory with object set 
$\calh$, let $\Delta$ be a finite group of order prime to $p$, and let 
$\chi\: \Mor(\calf^*) \Right2{} \Delta$ be such that 
\begin{enumi}\setcounter{enumi}{2}
\item $\chi(\incl_P^S)=1$ for all $P\in\calh$,
\item $\chi(\psi\varphi)=\chi(\psi)\chi(\varphi)$ for each composable pair of 
morphisms $\psi$ and $\varphi$ in $\calf^*$, and 
\item $\chi(\autf(S))=\Delta$. 
\end{enumi}
Let $\calf_0^*\subseteq\calf^*$ be the subcategory with the same objects, 
and with $\Mor(\calf_0^*)=\chi^{-1}(1)$. Set $\calf_0=\gen{\calf_0^*}$: the 
fusion system over $S$ generated by $\calf_0^*$. Then $\calf_0\nsg\calf$ is 
a normal saturated fusion subsystem, $O^{p'}(\calf)\le\calf_0\le\calf$, and 
$\Aut_{\calf_0}(S)=\Ker(\chi|_{\autf(S)})$.
\end{Lem}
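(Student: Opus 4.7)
My plan is to apply \cite[Theorem 5.4]{BCGLO2} to the subgroup $\Gamma:=\Ker(\chi|_{\autf(S)})\le\autf(S)$ to produce a saturated normal subsystem of $\calf$ of index prime to $p$, and then to identify that subsystem with $\calf_0=\gen{\calf_0^*}$.

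First, hypothesis (ii) forces $\calf^{cr}\sset\calh$: if some $P\in\calf^c\sminus\calh$ were $\calf$-radical, then (ii) would give $P^*\in P^\calf$ with $\Out_S(P^*)\cap O_p(\outf(P^*))\ne 1$; but $\outf(P^*)\iso\outf(P)$, so $O_p(\outf(P))\ne 1$, contradicting $\calf$-radicality of $P$. Alperin's fusion theorem (Theorem \ref{AFT}) then yields $\calf=\gen{\calf^*}$. Conditions (iii)--(iv) make $\chi|_{\autf(P)}\colon\autf(P)\to\Delta$ a group homomorphism for each $P\in\calh$, and since $|\Delta|$ is prime to $p$, its kernel contains every $p$-subgroup of $\autf(P)$, in particular $\Aut_S(P)$.

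Setting $\Gamma=\Ker(\chi|_{\autf(S)})$, condition (v) gives $[\autf(S):\Gamma]=|\Delta|$, prime to $p$; the preceding observation yields $\Aut_S(S)\le\Gamma$; and since $O^{p'}(\autf(S))$ is generated by $p$-elements (which map trivially to the $p'$-group $\Delta$), also $O^{p'}(\autf(S))\le\Gamma$. Theorem 5.4 of \cite{BCGLO2} then produces a unique saturated normal subsystem $\calf_\Gamma\nsg\calf$ of index prime to $p$ with $\Aut_{\calf_\Gamma}(S)=\Gamma$, automatically satisfying $O^{p'}(\calf)\le\calf_\Gamma$.

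The remaining task, and the main obstacle, is to prove $\calf_\Gamma=\calf_0:=\gen{\calf_0^*}$. A cardinality argument---any composite $\alpha=\varphi_n\circ\cdots\circ\varphi_1\in\Aut_{\calf_0}(S)$ obtained as restrictions of $\calf_0^*$-morphisms must pass through intermediate subgroups $S_i$ of order $|S|$, hence $S_i=S$ and each $\varphi_i\in\Aut_{\calf_0^*}(S)$---gives $\Aut_{\calf_0}(S)=\Aut_{\calf_0^*}(S)=\Gamma=\Aut_{\calf_\Gamma}(S)$. For the full equality I would apply Alperin inside the saturated $\calf_\Gamma$, reducing to matching each $\Aut_{\calf_\Gamma}(R)$ with $\Aut_{\calf_0^*}(R)=\Ker(\chi|_{\autf(R)})$ for the subgroups $R\in\calh$ that arise. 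The hard part is precisely this matching: given $\alpha\in\autf(R)$, lift $\alpha$ via Alperin-style decompositions through fully normalized centric radical overgroups in $\calh$ to a product in $\autf(S)$; then the cocycle identity (iv) together with the normalization $\chi(\incl)=1$ from (iii) identify the BCGLO2 $\Gamma$-coset invariant of that lift with $\chi(\alpha)\in\Delta$, forcing $\alpha\in\Aut_{\calf_\Gamma}(R)$ if and only if $\chi(\alpha)=1$. The inclusion $\calf_0\sset\calf_\Gamma$ follows by a direct check on generators, and the reverse from Alperin together with the identification of automorphism groups just described.
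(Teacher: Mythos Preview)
Your strategy differs from the paper's: rather than proving directly that $\calf_0$ is saturated (via \cite[Theorem 2.2]{BCGLO1}) and then checking the normality axioms, you invoke \cite[Theorem 5.4]{BCGLO2} to produce $\calf_\Gamma\nsg\calf$ and then try to identify $\calf_\Gamma$ with $\calf_0$. This is a reasonable idea, and your preliminary reductions (that $\calf^{cr}\sset\calh$, that $\Gamma\ge\Inn(S)$ and $\Gamma\ge O^{p'}(\autf(S))$, and the cardinality argument giving $\Aut_{\calf_0}(S)=\Gamma$) are correct.

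The gap is in the identification $\calf_\Gamma=\calf_0$, which you correctly flag as ``the hard part'' but then do not actually prove. Your proposed mechanism---``lift $\alpha$ via Alperin-style decompositions through fully normalized centric radical overgroups in $\calh$ to a product in $\autf(S)$''---does not describe a valid procedure: Alperin's theorem decomposes $\alpha\in\autf(R)$ as a composite of restrictions of automorphisms of \emph{various} centric radical subgroups, none of which need contain $R$, and it certainly does not produce an element of $\autf(S)$. What is really needed is to show that the BCGLO2 coset invariant $\theta\:\Mor(\calf^c)\to\autf(S)/\Aut_\calf^0(S)$ and your $\chi$ agree (after composing $\theta$ with the map to $\Delta$ induced by $\chi|_{\autf(S)}$). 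Both maps satisfy (iii)--(iv) and agree on $\autf(S)$, so this amounts to proving a \emph{uniqueness} statement for such functorial maps; that uniqueness is not a formality, and establishing it requires an inductive argument (Frattini in $\autf(R)$, extend the $N_{\autf(R)}(\Aut_S(R))$-part to a strictly larger subgroup, use that $p$-elements die in $\Delta$) of essentially the same shape and difficulty as the paper's direct saturation proof. Until that comparison is carried out, neither your ``direct check on generators'' for $\calf_0\le\calf_\Gamma$ nor the reverse inclusion via Alperin in $\calf_\Gamma$ goes through, since both rest on knowing that membership in $\calf_\Gamma$ is detected by $\chi$.

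In short: the route is viable, but the step you label as the main obstacle really is one, and your sketch of it is not yet a proof. The paper sidesteps this by never comparing to the BCGLO2 map at all: it verifies $\calh$-saturation of $\calf_0$ directly (fully automized and receptive transfer from $\calf$ to $\calf_0$ because $|\Delta|$ is prime to $p$), invokes \cite[Theorem 2.2]{BCGLO1} with hypothesis (ii) to get full saturation, and checks the invariance and Frattini conditions by hand.
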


\begin{proof} By (iv), $\calf_0^*$ is a subcategory of $\calf^*$. 
Since $|\Delta|$ is prime to $p$, $\chi(\Inn(S))=1$, and 
hence $\Inn(S)\le\Aut_{\calf_0^*}(S)$. Also, by (iii) and (iv), 
	\beqq \textup{$\chi(\varphi')=\chi(\varphi)$ when 
	$\varphi'\in\Mor(\calf^*)$ is any 
	restriction of $\varphi\in\Mor(\calf^*)$.} \label{e:same_chi} \eeqq
Since $\calh$ is closed under overgroups by (i), we thus have 
$\Hom_{\calf_0}(P,Q)=\Hom_{\calf_0^*}(P,Q)$ for all $P,Q\in\calh$.

We next show the following:
\begin{enumerate}[(1) ] \setcounter{enumi}{\theequation} 

\item \label{e:Fr.cond.}
For each $P,Q\in\calh$ and $\varphi\in\Hom_{\calf^*}(P,Q)$, there are 
morphisms $\alpha\in\autf(S)$ and 
$\varphi_0\in\Hom_{\calf_0^*}(P,Q^*)$, where $Q^*=\alpha^{-1}(Q)$, such that 
$\varphi=\alpha|_{Q^*}\circ\varphi_0$.

\item \label{e:P^F0}
If $P,Q\in\calh$ and $Q\in P^\calf$, then there is 
$\alpha\in\autf(S)$ such that $\alpha(Q)\in P^{\calf_0}$.

\item \label{e:F0-f.aut.}
If $P\in\calh$ is fully automized in $\calf$, then it is fully 
automized in $\calf_0$. 

\item \label{e:F0-recept.}
If $P\in\calh$ is receptive in $\calf$, then it is receptive in 
$\calf_0$. 

\end{enumerate} \setcounter{equation}{\theenumi}
Point \eqref{e:Fr.cond.} follows from (v) (and \eqref{e:same_chi}): choose 
$\alpha$ such that $\chi(\alpha)=\chi(\varphi)$, and set 
$\varphi_0=\alpha^{-1}\varphi\in\Mor(\calf_0^*)$. Point \eqref{e:P^F0} 
follows immediately from \eqref{e:Fr.cond.}. 
Point \eqref{e:F0-f.aut.} holds since if $\Aut_S(P)\in\sylp{\autf(P)}$, 
then $\Aut_S(P)$ is also a Sylow $p$-subgroup in $\Aut_{\calf_0}(P)$.

If $P\in\calh$ is receptive in $\calf$, then each 
$\varphi\in\Hom_{\calf_0}(Q,S)$ with $\varphi(Q)=P$ extends to some 
$\4\varphi\in\homf(N_\varphi,S)$, where $N_\varphi=\{x\in 
N_S(Q)\,|\,\varphi c_x\varphi^{-1}\in\Aut_S(P)\}$. Then $N_\varphi\in\calh$ 
by (i) and $\chi(\4\varphi)=\chi(\varphi)=1$ by \eqref{e:same_chi}, so 
$\4\varphi\in\Hom_{\calf_0}(N_\varphi,S)$. Thus $P$ is receptive in 
$\calf_0$ in this case, and this proves \eqref{e:F0-recept.}. 

Since $\calf$ is saturated, for each $P\in\calh$, there is $Q\in P^\calf$ 
which is fully automized and receptive in $\calf$. By \eqref{e:P^F0}, there 
is $\alpha\in\autf(S)$ such that $\alpha(Q)\in P^{\calf_0}$. Then 
$\alpha(Q)$ is also fully automized and receptive in $\calf$, hence in 
$\calf_0$ by \eqref{e:F0-f.aut.} and \eqref{e:F0-recept.}. Thus $\calf_0$ 
is $\calh$-saturated, and it is $\calf_0$-generated by definition. So by 
(ii) and \cite[Theorem 2.2]{BCGLO1}, $\calf_0$ is saturated. 

We claim that 
	\beqq \alpha\in\autf(S),~ \varphi\in\Mor(\calf_0) 
	\quad\implies\quad \alpha\varphi\alpha^{-1}\in\Mor(\calf_0). 
	\label{e:invar.cond.} \eeqq
Here, $\alpha\varphi\alpha^{-1}$ means composition on each side with the 
appropriate restriction of $\alpha$ or $\alpha^{-1}$. This holds for 
$\varphi\in\Mor(\calf_0^*)$ by definition (and \eqref{e:same_chi}), and 
hence holds for all composites of restrictions of such morphisms. Since 
$\calf_0=\gen{\calf_0^*}$, \eqref{e:invar.cond.} holds for all 
$\varphi\in\Mor(\calf_0)$.

We next check that 
	\beqq 
	\forall~ \varphi\in\Mor(\calf),\quad \exists~
	\varphi_0\in\Mor(\calf_0),~ \alpha\in\autf(S) \quad\textup{such 
	that}\quad \varphi=\alpha\varphi_0. \label{e:Frat.cond.} \eeqq
Since $\calh\supseteq\calf^{cr}$ by (ii), $\calf=\gen{\calf^*}$ by 
Alperin's fusion theorem. Hence $\calf=\gen{\calf_0,\autf(S)}$ by 
\eqref{e:Fr.cond.}. By \cite[Lemma 3.4.c]{BCGLO2} and 
\eqref{e:invar.cond.}, this suffices to prove \eqref{e:Frat.cond.}.

Since the extension condition for normality holds trivially in this 
case, this proves that $\calf_0\nsg\calf$ (see \cite[Definition 
I.6.1]{AKO}). Hence $\calf_0\ge O^{p'}(\calf)$, since $\calf$ and $\calf_0$ 
are both saturated fusion systems over $S$ (see \cite[Lemma 1.26]{AOV1}). 
\end{proof}

The next lemma is needed to check that point (ii) holds when we apply Lemma 
\ref{Ker(chi)}.

\begin{Lem} \label{L<|L*}
Let $\cale\nsg\calf$ be saturated fusion systems over $T\nsg S$. Then for 
each $P\in\calf^c$ such that $P\cap T\notin\cale^c$, there is $P^*\in 
P^\calf$ such that $\Out_S(P^*)\cap O_p(\outf(P^*))\ne1$. In particular, 
$P\notin\calf^{cr}$.
\end{Lem}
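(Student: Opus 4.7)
The plan is to find an $\calf$-conjugate $P^*\in P^\calf$ whose intersection $R^*:=P^*\cap T$ is fully normalized in $\calf$ and satisfies $C_T(R^*)\not\le R^*$; then any element $y\in C_T(R^*)\sminus R^*$ that normalizes $P^*$ will provide an $S$-conjugation automorphism of $P^*$ that is non-inner and acts trivially on both $R^*$ and $P^*/R^*$, hence lies in $O_p(\outf(P^*))\cap\Out_S(P^*)$. To construct $P^*$: pick any fully normalized $R^*\in(P\cap T)^\calf$. Since $R^*$ is fully automized in $\calf$, a Sylow argument in $\autf(R^*)$ produces $\varphi\in\Iso_\calf(P\cap T,R^*)$ with $\varphi\Aut_P(P\cap T)\varphi^{-1}\le\Aut_S(R^*)$. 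This places $P$ inside $N_\varphi$, and receptivity of $R^*$ extends $\varphi$ to $\bar\varphi$ defined on $P$; strong closure of $T$ then forces $P^*:=\bar\varphi(P)$ to satisfy $P^*\cap T=R^*$.

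The heart of the argument is verifying $C_T(R^*)\not\le R^*$. By the Frattini condition every $\calf$-morphism among subgroups of $T$ factors as an element of $\autf(T)$ following an $\cale$-morphism, and by the invariance axiom $\autf(T)$ acts on $\cale$; hence ``not $\cale$-centric'' is a $\calf$-conjugacy-invariant property among subgroups of $T$. So $R^*\notin\cale^c$, and we may fix an $\cale$-conjugate $R'$ of $R^*$ with $C_T(R')\not\le R'$. Since $R^*$ is fully centralized in $\calf$ (being fully normalized), it is receptive in $\calf$; applied to an isomorphism $\zeta\:R'\to R^*$ in $\calf$, and noting that $\zeta c_g\zeta^{-1}=\Id$ for $g\in C_S(R')$ (so $C_S(R')\le N_\zeta$), receptivity extends $\zeta$ to $R'C_S(R')\to S$. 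Strong closure of $T$ sends $C_T(R')$ into $C_T(R^*)$, so $|C_T(R^*)|\ge|C_T(R')|>|Z(R')|=|Z(R^*)|=|C_T(R^*)\cap R^*|$, and therefore $C_T(R^*)\not\le R^*$.

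Set $M=C_T(R^*)$; a direct check using $T\nsg S$ and $R^*\nsg P^*$ shows that $P^*$ normalizes $M$, so $H:=P^*M$ is a $p$-subgroup of $S$ strictly larger than $P^*$. The $p$-group normalizer theorem then gives $P^*\lneq N_H(P^*)=P^*\cdot N_M(P^*)$, so $N_M(P^*)$ strictly contains $M\cap P^*=Z(R^*)$; pick $y\in N_M(P^*)\sminus Z(R^*)$. Then $y\in N_S(P^*)\cap C_T(R^*)$ while $y\notin P^*$ (since $y\in T$ but $y\notin R^*$). By construction $c_y|_{R^*}=\Id$, and $[y,P^*]\le T\cap P^*=R^*$ shows that $c_y$ is trivial on $P^*/R^*$; thus $c_y$ lies in the normal $p$-subgroup of $\autf(P^*)$ of automorphisms trivial on $R^*$ and on $P^*/R^*$, hence in $O_p(\autf(P^*))$. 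Finally $c_y\notin\Inn(P^*)$, for otherwise $yx^{-1}\in C_S(P^*)\le P^*$ (by $\calf$-centricity of $P^*$) would force $y\in P^*$. So the image of $c_y$ gives a nontrivial element of $\Out_S(P^*)\cap O_p(\outf(P^*))$, and the ``in particular'' assertion follows since $\calf^{cr}$ is $\calf$-conjugacy-closed.

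The step I expect to be most delicate is the second paragraph: $R^*$ chosen fully normalized in $\calf$ need not be fully centralized in the smaller system $\cale$, so one cannot simply quote ``not $\cale$-centric'' to deduce $C_T(R^*)\not\le R^*$ directly. The resolution is to use receptivity in $\calf$ (which is available because fully normalized in $\calf$ gives fully centralized in $\calf$) to transfer the centralizer obstruction from any $\cale$-conjugate $R'$ back to $R^*$, combined with the $\calf$-conjugacy-invariance of ``not $\cale^c$'' derived from the Frattini and invariance conditions.
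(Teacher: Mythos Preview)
Your proposal is correct and follows essentially the same route as the paper's proof: choose $P^*\in P^\calf$ with $P^*\cap T$ fully normalized in $\calf$, use receptivity of $P^*\cap T$ in $\calf$ to transport the centralizer obstruction from an $\cale$-conjugate and conclude $C_T(P^*\cap T)\nleq P^*$, then pick $y\in N_{C_T(P^*\cap T)}(P^*)\sminus P^*$ and observe that $c_y$ is trivial on $P^*\cap T$ and on $P^*/(P^*\cap T)$. The paper obtains $P^*$ in one stroke by citing \cite[Lemma I.2.6(c)]{AKO} to get $\varphi\in\homf(N_S(P\cap T),S)$ with $\varphi(P\cap T)$ fully normalized, whereas you spell out the Sylow-plus-receptivity construction by hand; your second paragraph likewise unpacks what the paper compresses into a single sentence, but the underlying argument is the same.
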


\begin{proof} By \cite[Lemma I.2.6.c]{AKO}, there is 
$\varphi\in\homf(N_S(P\cap T),S)$ such that $\varphi(P\cap T)$ is fully 
normalized in $\calf$. Set $P^*=\varphi(P)$; then $\varphi(P\cap T)=P^*\cap T$ 
since $T$ is strongly closed in $\calf$, and hence $P^*\cap T$ is fully 
normalized in $\calf$. Since $P\cap T\notin\cale^c$, there is $Q\in(P\cap 
T)^\cale\subseteq(P^*\cap T)^\calf$ such that $C_T(Q)\nleq Q$, each 
$\psi\in\isof(Q,P^*\cap T)$ extends to $QC_S(Q)$, and hence $C_T(P^*\cap 
T)\nleq P^*$. 

Thus $P^*C_T(P^*\cap T)>P^*$, so 
$N_{P^*C_T(P^*\cap T)}(P^*)>P^*$, and there is $x\in N_T(P^*)\sminus P^*$ 
such that $[x,P^*\cap T]=1$. Conjugation by $x$ induces the identity on 
$P^*\cap T$ and on $P^*/(P^*\cap T)$, so $c_x\in O_p(\autf(P^*))$. Also, 
$c_x\notin\Inn(P^*)$ since $P^*\in\calf^c$, so $1\ne[c_x]\in\Out_S(P^*)\cap 
O_p(\outf(P^*))$. 
\end{proof}

The next proposition provides a more explicit way to construct proper 
normal subsystems of index prime to $p$. Note that the existence of a 
normal subsystem over the strongly closed subgroup $T$ is crucial, as is 
clearly seen by considering the case where $T=S$. In fact, the proposition 
is rather trivial when $T=S$ or $T=1$, and is useful only when $1\ne T<S$.

\begin{Prop} \label{O^p'(F)}
Let $\cale\nsg\calf$ be saturated fusion systems over $p$-groups $T\nsg S$. Let 
$\chi_0\:\autf(T)\too\Delta$ be a surjective homomorphism, for some 
$\Delta\ne1$ of order prime to $p$, such that 
$\Aut_\cale(T)\le\Ker(\chi_0)$. Then there is a unique proper 
normal subsystem $\calf_0\nsg\calf$ over $S$ such that 
	\beqq \Aut_{\calf_0}(S)=\bigl\{\alpha\in\autf(S)\,\big|\,
	\alpha|_T\in\Ker(\chi_0) \bigr\} \label{e:AutF0(S)} \eeqq
and $\calf_0\ge\cale$. In particular, $O^{p'}(\calf)\le\calf_0<\calf$.
\end{Prop}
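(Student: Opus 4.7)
My strategy is to apply Lemma~\ref{Ker(chi)} with data constructed from the normal pair $\cale\nsg\calf$ and the character $\chi_0$. Set
\[ \calh = \bigl\{P\in\calf^c \,\big|\, P\cap T\in\cale^c\bigr\}, \]
and let $\calf^*\sset\calf$ be the full subcategory with object set $\calh$. Closure of $\calh$ under $\calf$-conjugacy uses strong closure of $T$ together with the Frattini condition, which identifies $\calf$-conjugacy within $T$ with $\cale$-conjugacy. Closure under overgroups is immediate: if $P\le Q\le S$, $P\in\calh$, and $Q\in\calf^c$, then $C_T(Q\cap T)\le C_T(P\cap T)\le P\cap T\le Q\cap T$. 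Condition~(ii) of Lemma~\ref{Ker(chi)} is precisely the content of Lemma~\ref{L<|L*}.

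To define $\chi\:\Mor(\calf^*)\too\Delta$, I apply the Frattini condition to each $\varphi\in\Hom_{\calf^*}(P,Q)$: the restriction $\varphi|_{P\cap T}\:P\cap T\too T$ factors as $\alpha\circ\varphi_0$ with $\alpha\in\autf(T)$ and $\varphi_0\in\Hom_\cale(P\cap T,T)$, and I set $\chi(\varphi)=\chi_0(\alpha)$. The key technical step is well-definedness: if also $\varphi|_{P\cap T}=\alpha'\circ\varphi_0'$, then $(\alpha')^{-1}\alpha$ restricted to the $\cale$-centric subgroup $\varphi_0(P\cap T)$ equals the $\cale$-morphism $\varphi_0'\varphi_0^{-1}$. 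The crux is the structural fact that any $\beta\in\autf(T)$ whose restriction to some $\cale$-centric subgroup of $T$ is an $\cale$-morphism must itself lie in $\Aut_\cale(T)$; granted this, $(\alpha')^{-1}\alpha\in\Aut_\cale(T)\le\Ker(\chi_0)$, so $\chi_0(\alpha)=\chi_0(\alpha')$. The same fact, applied to spliced Frattini decompositions, yields functoriality of $\chi$, and $\chi$ vanishes on inclusions since one may take $\alpha=\Id_T$. Condition~(v) reduces via $\chi(\alpha)=\chi_0(\alpha|_T)$ for $\alpha\in\autf(S)$ to a second structural fact: the product decomposition $\autf(T)=\Aut_\cale(T)\cdot r(\autf(S))$, where $r\:\autf(S)\too\autf(T)$ denotes restriction.

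Granted these two structural facts, Lemma~\ref{Ker(chi)} furnishes a saturated normal subsystem $\calf_0=\gen{\calf_0^*}\nsg\calf$ over $S$ with $O^{p'}(\calf)\le\calf_0$ and $\Aut_{\calf_0}(S)=\Ker(\chi|_{\autf(S)})=\bigl\{\alpha\in\autf(S):\alpha|_T\in\Ker(\chi_0)\bigr\}$, which is (\ref{e:AutF0(S)}); properness $\calf_0<\calf$ follows from $\Delta\ne 1$ and surjectivity of $\chi|_{\autf(S)}$. The inclusion $\cale\le\calf_0$ rests on the fact that each $\psi\in\Mor(\cale)$ between $\calh$-objects admits the trivial Frattini decomposition $\Id_T\circ\psi$, so $\chi(\psi)=1$; combined with Alperin's theorem (Theorem~\ref{AFT}) applied to $\cale$ and an extension argument to bring $\cale^{cr}$-subgroups outside $\calf^c$ into $\calh$-overgroups, this yields $\cale\le\calf_0$. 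Uniqueness follows from the standard correspondence between normal subsystems of index prime to $p$ containing $O^{p'}(\calf)$ and subgroups of $\autf(S)$ containing $\Aut_{O^{p'}(\calf)}(S)$.

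The main obstacles I anticipate are the two structural claims: (i) an element of $\autf(T)$ that restricts to an $\cale$-morphism on some $\cale$-centric subgroup of $T$ must itself belong to $\Aut_\cale(T)$; and (ii) the product decomposition $\autf(T)=\Aut_\cale(T)\cdot r(\autf(S))$. Both are intrinsic to the normal pair $\cale\nsg\calf$ and should be proved using all four axioms of Definition~\ref{d:E<|F}---strong closure, invariance, Frattini, and extension.
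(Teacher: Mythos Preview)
Your overall plan---construct $\chi$ from Frattini decompositions and feed it into Lemma~\ref{Ker(chi)} on the set $\calh=\{P\in\calf^c:P\cap T\in\cale^c\}$---is exactly the paper's strategy. The genuine gap is your structural claim (i): it is false. Take $p=2$, $S=D_{16}=\langle r,s\mid r^8=s^2=1,\ srs=r^{-1}\rangle$, $T=\langle r^2,s\rangle\cong D_8$, $\cale=\calf_T(T)$, $\calf=\calf_S(S)$; then $\cale\nsg\calf$. The cyclic subgroup $C=\langle r^2\rangle\cong C_4$ is $\cale$-centric, and $\beta=c_r|_T\in\Aut_S(T)=\autf(T)$ satisfies $\beta|_C=\Id_C\in\Mor(\cale)$, yet $\beta\notin\Inn(T)=\Aut_\cale(T)$ since it interchanges the two Klein four-subgroups of $T$. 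So from ``$(\alpha')^{-1}\alpha$ restricts to an $\cale$-morphism on an $\cale$-centric subgroup'' you cannot conclude $(\alpha')^{-1}\alpha\in\Aut_\cale(T)$. What you actually need is only the weaker $(\alpha')^{-1}\alpha\in\Ker(\chi_0)$, but your proposed route to it is blocked.

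The paper establishes well-definedness by a different mechanism: downward induction on $|P|$. Given two decompositions $\varphi=\alpha|_{Q_1}\varphi_0=\beta|_{Q_2}\psi_0$ of a morphism with $\cale$-centric source $P<T$, the $\cale$-isomorphism $\beta^{-1}\alpha|_{Q_1}$ extends in $\cale$ (receptivity, using that it extends in $\calf$ to all of $T$) to some $\gamma\in\Hom_\cale(N_T(Q_1),T)$. The point is not that $\gamma$ agrees with $\beta^{-1}\alpha$ on $N_T(Q_1)$, but that $\alpha^{-1}\beta\gamma\in\autf(N_T(Q_1))$ is the identity on $Q_1$ and on $N_T(Q_1)/Q_1$, hence has $p$-power order, hence has trivial $\chi$-value. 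The inductive hypothesis (on the strictly larger subgroup $N_T(Q_1)$) then gives $\chi_0(\alpha)=\chi_0(\beta)$. No claim that $\beta^{-1}\alpha\in\Aut_\cale(T)$ is ever made. Your claim (ii) is likewise bypassed: the paper gets surjectivity of $\chi|_{\autf(S)}$ from the Frattini decomposition $\autf(T)=N_{\autf(T)}(\Aut_S(T))\cdot O^{p'}(\autf(T))$, noting that $\chi_0$ kills $O^{p'}(\autf(T))$ and that elements of the normalizer extend to $\autf(S)$ by receptivity of $T$.
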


\begin{proof} The uniqueness of $\calf_0$ follows from \eqref{e:AutF0(S)} 
and \cite[Theorem 5.4]{BCGLO2}: a saturated fusion subsystem of index prime 
to $p$ in $\calf$ is uniquely determined by the automizer of $S$.

Let $\calf|_{\cale^c}\subseteq\calf$ be the full subcategory 
with objects in $\cale^c$. We first show that there is a map 
	\[ \chi\: \Mor(\calf|_{\cale^c}) \Right4{} \Delta \]
which extends $\chi_0$, which sends composites to products, and which sends 
$\Mor(\cale^c)$ to the identity. By the Frattini condition for a normal 
fusion subsystem \cite[Definition I.6.1]{AKO}, for each $P,Q\in\cale^c$ and 
each $\varphi\in\homf(P,Q)$, there are $\alpha\in\autf(T)$ and 
$\varphi_0\in\Hom_\cale(P,Q_1)$, where $Q_1=\alpha^{-1}(Q)$, such that 
$\varphi=\alpha|_{Q_1}\circ\varphi_0$. In this situation, the conditions 
imposed on $\chi$ imply that $\chi(\varphi)=\chi(\alpha)=\chi_0(\alpha)$. 
It remains to prove that this is independent of the choice of decomposition 
of $\varphi$, and that it sends composites to products. 

To see that $\chi$ sends composites to products when it is uniquely 
defined, fix a composable pair of morphisms $\psi,\varphi\in\Mor(\calf)$: 
a pair such that $\psi\circ\varphi$ is 
defined. Assume $\varphi=\alpha\varphi_0$ and $\psi=\beta\psi_0$ (after 
appropriate restrictions of $\alpha$ and $\beta$), where 
$\varphi_0,\psi_0\in\Mor(\cale)$ and $\alpha,\beta\in\autf(T)$. Thus 
$\chi(\varphi)=\chi_0(\alpha)$ and $\chi(\psi)=\chi_0(\beta)$. Also, 
	\[ \psi\circ\varphi=\beta\psi_0\alpha\varphi_0
	=(\beta\alpha)(\alpha^{-1}\psi_0\alpha)\varphi_0 \]
where $\alpha^{-1}\psi_0\alpha\in\Mor(\cale)$ since 
$\cale\nsg\calf$. So $\chi(\psi\circ\varphi)=\chi_0(\beta\alpha)$.

Again fix $P,Q\in\cale^c$ and $\varphi\in\homf(P,Q)$. Let 
$\varphi=\alpha|_{Q_1}\circ\varphi_0=\beta|_{Q_2}\circ\psi_0$ be two 
decompositions, where $\alpha,\beta\in\autf(T)$, $Q_1=\alpha^{-1}(Q)$, 
$Q_2=\beta^{-1}(Q)$, and $\varphi_0,\psi_0\in\Mor(\cale)$. If $P=Q=T$, then 
all of these morphisms lie in $\autf(T)$, and 
$\chi_0(\alpha)=\chi_0(\beta)$ since $\chi_0(\Aut_\cale(T))=1$. So assume 
$P<T$, and also assume inductively that $\chi$ is uniquely defined on all 
morphisms between subgroups in $\cale^c$ strictly larger than $P$. Then 
$(\beta^{-1}\alpha|_{Q_1})\varphi_0=\psi_0\in\Hom_\cale(P,Q_2)$. Since 
$\beta^{-1}\alpha|_{Q_1}\in\Hom_\cale(Q_1,Q_2)$ extends in $\calf$ to $T$, 
it also extends in $\cale$ to $N_T(Q_1)>Q_1$ (recall that all 
$\cale$-centric subgroups are receptive by Lemma \ref{f.c.=>receptive}(c)). 
Let $\gamma\in\Hom_\cale(N_T(Q_1),T)$ be such that 
$\beta^{-1}\alpha|_{Q_1}=\gamma|_{Q_1}$. Since $Q_1\in\cale^c$, 
$\beta^{-1}\alpha(N_T(Q_1))=\gamma(N_T(Q_1))$, so 
$\alpha^{-1}\beta\gamma\in\autf(N_T(Q_1))$. This automorphism has $p$-power 
order since it is the identity on $Q_1$ and on $N_T(Q_1)/Q_1$, and hence 
$\chi(\alpha^{-1}\beta\gamma)=1$. Since all of these homomorphisms involve 
subgroups strictly larger than $P$, 
$\chi(\alpha)^{-1}\chi(\beta)\chi(\gamma)=1$, where $\chi(\gamma)=1$ since 
$\gamma\in\Mor(\cale^c)$. So $\chi_0(\alpha)=\chi_0(\beta)$, and the two 
decompositions of $\varphi$ give the same value for $\chi(\varphi)$. 

Thus $\chi$ is uniquely defined. Set
	\[ \calh^* = \{P\in\calf^c \,|\, P\cap T\in\cale^c \}, \]
and let $\5\chi$ be the composite
	\[ \5\chi\: \Mor(\calf|_{\calh^*}) \Right4{R} \Mor(\calf|_{\cale^c}) 
	\Right4{\chi} \Delta , \]
where $R$ sends $\varphi\in\homf(P,Q)$ to $\varphi|_{P\cap T}\in 
\homf(P\cap T,Q\cap T)$. Since $T\nsg S$, $T$ is fully 
normalized, and hence is fully automized and receptive (Lemma 
\ref{f.c.=>receptive}(b)). Hence $\Aut_S(T)$ lies in 
$\sylp{\autf(T)}=\sylp{O^{p'}(\autf(T))}$, and so 
	\[ \autf(T) = N_{\autf(T)}(\Aut_S(T))\cdot O^{p'}(\autf(T)) \]
by the Frattini argument. Since $T$ is receptive, each 
$\alpha\in N_{\autf(T)}(\Aut_S(T))$ extends to some $\4\alpha\in\autf(S)$, and 
$\5\chi(\alpha)=\5\chi(\4\alpha)\5\chi(\incl_T^S)=\5\chi(\4\alpha)$. Since 
$\5\chi(O^{p'}(\autf(T)))=1$, $\5\chi(\autf(S))=\5\chi(\autf(T))=\Delta$. 

Condition (ii) in Lemma \ref{Ker(chi)} holds by Lemma \ref{L<|L*}. We 
just checked condition (v), condition (iv) holds for $\5\chi$ since it 
holds for $\chi$, and the other two are clear. So by that 
lemma, there is $\calf_0<\calf$ which is normal in $\calf$ and contains 
$O^{p'}(\calf)$, and such that $\Aut_{\calf_0}(S)$ is as required.

It remains to show that $\calf_0\ge\cale$. If $P\in\cale^c$ and $P$ is 
fully centralized in $\calf$, then each $\varphi\in\Aut_\cale(P)$ extends 
to some $\4\varphi\in\autf(PC_S(P))$, where $PC_S(P)\in\calf^c$ and 
$\5\chi(\4\varphi)=\chi(\varphi)=1$, so $\4\varphi$ and hence $\varphi$ are 
in $\Mor(\calf_0)$. If $P\in\cale^c$ is arbitary, then $\psi(P)$ is fully 
centralized in $\calf$ for some $\psi\in\homf(P,T)$, and 
$\Aut_\cale(P)=(\Aut_\cale(\varphi(P)))^\varphi$ and 
$\Aut_{\calf_0}(P)=(\Aut_{\calf_0}(\varphi(P)))^\varphi$ since $\cale$ and 
$\calf_0$ are normal in $\calf$ (see \cite[Proposition I.6.4(d)]{AKO}, 
applied with $Q=P$). Hence $\Aut_\cale(P)\le\Aut_{\calf_0}(P)$ for all 
$P\in\cale^c$, and $\cale\le\calf_0$ by Alperin's fusion theorem.
\end{proof}

\begin{Cor} 
If $\cale\nsg\calf$ are saturated fusion systems over $T\nsg S$, and 
	\[ \Aut_\cale(T)O^{p'}(\autf(T))<\autf(T), \]
then $O^{p'}(\calf)<\calf$.
\end{Cor}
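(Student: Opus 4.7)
The plan is to reduce this corollary to a direct application of Proposition \ref{O^p'(F)}. The only content is to extract the required surjective homomorphism $\chi_0$ from the hypothesis.

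Set $N = \Aut_\cale(T)\cdot O^{p'}(\autf(T))$, which is a normal subgroup of $\autf(T)$ (since $\cale\nsg\calf$ implies $\Aut_\cale(T)\nsg\autf(T)$ by the invariance condition, and $O^{p'}$ is characteristic). By hypothesis $N<\autf(T)$, so the quotient $\Delta=\autf(T)/N$ is nontrivial. Since $\Delta$ is a quotient of $\autf(T)/O^{p'}(\autf(T))$, which is a $p'$-group by definition of $O^{p'}$, the group $\Delta$ has order prime to $p$. Let $\chi_0\colon\autf(T)\twoheadrightarrow\Delta$ be the canonical surjection; then $\chi_0$ satisfies the hypotheses of Proposition \ref{O^p'(F)}, namely that it is a surjection onto a nontrivial $p'$-group containing $\Aut_\cale(T)$ in its kernel.

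Proposition \ref{O^p'(F)} then furnishes a proper normal subsystem $\calf_0<\calf$ over $S$ with $O^{p'}(\calf)\le\calf_0$. In particular $O^{p'}(\calf)\le\calf_0<\calf$, so $O^{p'}(\calf)<\calf$, as required. There is no genuine obstacle here — the whole argument is essentially a bookkeeping step verifying that the hypothesis translates into the input data of the proposition.
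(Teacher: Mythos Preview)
Your proof is correct and is exactly the argument the paper intends: the corollary is stated without proof immediately after Proposition \ref{O^p'(F)}, and your construction of $\chi_0$ as the quotient map $\autf(T)\to\autf(T)/\Aut_\cale(T)O^{p'}(\autf(T))$ is the evident way to verify its hypotheses.
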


We now turn to constructions of normal subsystems of $p$-power index. 
Recall first the definition of the hyperfocal subgroup $\hyp(\calf)$ for a 
saturated fusion system $\calf$ over a $p$-group $S$:
	\[ \hyp(\calf) = \Gen{[O^p(\autf(P)),P] \,\big|\, P\le S} \nsg S. \]

\begin{Lem} \label{l:O^p(F)}
Let $\calf$ be a saturated fusion system over a $p$-group $S$, and assume 
$T\nsg S$ is strongly closed in $\calf$. Then 
\begin{enuma} 

\item $\hyp(\calf/T)=T\cdot\hyp(\calf)/T$;

\item $TC_S(T)$ is strongly closed in $\calf$; and 

\item the natural isomorphism $S/TC_S(T)\cong\Out_S(T)$ extends to an 
isomorphism of fusion systems 
$\calf/TC_S(T)\cong\calf_{\Out_S(T)}(\outf(T))$.

\end{enuma}
\end{Lem}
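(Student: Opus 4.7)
My plan is to handle the three parts together using the quotient fusion system $\calf/T$, saturated because $T$ is strongly closed, and the natural group homomorphism
\[
\bar\rho \colon S/T \too \Out(T), \qquad sT \mapsto [c_s|_T],
\]
which is well-defined since $T$ maps to $\Inn(T)$, has kernel $TC_S(T)/T$, and image $\Out_S(T)$. For (a), reduction mod $T$ induces a surjection $\autf(P) \surj \Aut_{\calf/T}(PT/T)$ (well-defined by strong closure of $T$, and surjective because every $\calf/T$-morphism lifts to a $\calf$-morphism on a preimage containing $T$). Since $O^p$ of a finite group is preserved by surjections, and commutators reduce mod $T$ to commutators, applying reduction to the generators $[O^p(\autf(P)),P]$ of $\hyp(\calf)$ gives both containments of $\hyp(\calf/T) = T\cdot\hyp(\calf)/T$.

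For (b), I would show that $\bar C = TC_S(T)/T = \Ker(\bar\rho)$ is strongly closed in $\calf/T$. Given a morphism $\bar\varphi \colon \bar P \to \bar Q$ in $\calf/T$, take a lift $\varphi\colon P_1 \to Q_1$ in $\calf$ with $P_1, Q_1 \ge T$, so that $\varphi|_T \in \autf(T)$ by strong closure. A direct calculation yields
\[
c_{\varphi(x)}|_T \;=\; \varphi|_T \circ c_x|_T \circ (\varphi|_T)^{-1}
\]
for all $x \in P_1$. Hence $\bar\rho\bigl(\bar\varphi(\bar x)\bigr) = [\varphi|_T]\,\bar\rho(\bar x)\,[\varphi|_T]^{-1}$, and since $\Inn(T) \nsg \Aut(T)$ the kernel $\bar C$ is preserved by $\bar\varphi$. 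Thus $\bar C$ is strongly closed in $\calf/T$, and its preimage $TC_S(T)\supseteq T$ in $S$ is strongly closed in $\calf$.

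For (c), by (b) we form $\calf/TC_S(T)$, a saturated fusion system on $S/TC_S(T)\cong\Out_S(T)$ via $\bar\rho$. The equivariance formula from (b) shows that every morphism of $\calf/TC_S(T)$, induced by some $\varphi\colon P'\to Q'$ with $P',Q'\ge TC_S(T)$, acts on $\bar P \le \Out_S(T)$ as conjugation by $[\varphi|_T]\in\outf(T)$, giving a functor $\calf/TC_S(T)\to\calf_{\Out_S(T)}(\outf(T))$. For the inverse, given $[\alpha]\in\outf(T)$ with $\alpha\in\autf(T)$ and $\bar P\le\Out_S(T)$ satisfying $[\alpha]\bar P[\alpha]^{-1}\le\Out_S(T)$, receptivity of $T$ (Lemma \ref{f.c.=>receptive}(b), since $T\nsg S$ is fully normalized) extends $\alpha$ to $\tilde\alpha\in\homf(N_\alpha,S)$ where $N_\alpha=\{g\in S\mid \alpha c_g\alpha^{-1}\in\Aut_S(T)\}$; the hypothesis on $\bar P$ translates to the preimage of $\bar P$ in $S$ lying in $N_\alpha$, so the reduction $\overline{\tilde\alpha}$ realizes conjugation by $[\alpha]$ as a morphism in $\calf/TC_S(T)$, giving the inverse functor and hence the isomorphism.

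The main obstacle is the foundational machinery for $\calf/T$---its definition, saturation, and in particular the lifting of its morphisms to $\calf$-morphisms on subgroups containing $T$---which is a standard but non-trivial result for strongly closed subgroups in saturated fusion systems not established in the excerpt.
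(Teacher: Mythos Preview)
Your proposal is correct and follows essentially the same approach as the paper: both rely on the quotient morphism $\calf\to\calf/T$ from \cite[Theorem II.5.12]{AKO} for (a), the conjugation identity $c_{\varphi(x)}|_T=(\varphi|_T)\,c_x|_T\,(\varphi|_T)^{-1}$ for (b) and (c), and receptivity of $T$ (Lemma~\ref{f.c.=>receptive}(b)) for the converse in (c). One small imprecision: your surjection $\autf(P)\surj\Aut_{\calf/T}(PT/T)$ is only guaranteed (and only needed) when $P\ge T$, since the lift you invoke lands in $\autf(PT)$ rather than $\autf(P)$; for arbitrary $P$ you use only that the reduction map exists and carries $p'$-elements to $p'$-elements, which suffices for the inclusion $T\cdot\hyp(\calf)/T\le\hyp(\calf/T)$.
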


\begin{proof} By \cite[Theorem II.5.12]{AKO}, and since $T$ is strongly 
closed in $\calf$, there is a morphism of fusion systems 
$(\Psi,\5\Psi)\:\calf\too\calf/T$ which sends $P$ to $PT/T$ and sends 
$\varphi\in\homf(P,Q)$ to the induced homomorphism between quotient groups. 

Set $\5T=TC_S(T)$ for short. 

\smallskip

\noindent\textbf{(a) } If $P\le S$, and $\varphi\in\autf(P)$ has order 
prime to $p$, then $\5\Psi(\varphi)\in\Aut_{\calf/T}(PT/T)$ also has order 
prime to $p$. Hence $T[\varphi,P]/T\le\hyp(\calf/T)$. Since $\hyp(\calf)$ 
is generated by such commutators $[\varphi,P]$ by definition, this proves 
that $T\cdot\hyp(\calf)/T\le\hyp(\calf/T)$. 

Conversely, for each $P/T\le S/T$, and each $\psi\in\Aut_{\calf/T}(P/T)$ of 
order prime to $p$, $\psi$ lifts to some $\5\psi\in\autf(P)$ by definition 
of $\calf/T$, and $[\psi,P/T]=T[\5\psi,P]/T$ where 
$[\5\psi,P]\le\hyp(\calf)$. Since $\hyp(\calf/T)$ is generated by such 
commutators $[\psi,P/T]$, this proves that $\hyp(\calf/T)\le T\cdot\hyp(\calf)/T$.

\smallskip

\noindent\textbf{(b) } Fix $P\le \5T$ and $\varphi\in\homf(P,S)$. Choose 
$\5\varphi\in\homf(PT,S)$ such that 
$\5\Psi(\5\varphi)=\5\Psi(\varphi)\in\Hom_{\calf/T}(PT/T,S/T)$. Then 
$\5\varphi(T)=T$, $PT\le \5T=TC_S(T)$, so $PT=TC_{PT}(T)$, and 
$\varphi(P)\le\5\varphi(PT)=TC_{\5\varphi(PT)}(T)\le\5T$. Thus $\5T$ is 
strongly closed.

\smallskip

\noindent\textbf{(c) } Fix $P,Q\le S$ which contain $\5T$, and 
$\varphi\in\homf(P,Q)$. Let $\5\varphi\in\Hom_{\calf/\5T}(P/\5T,Q/\5T)$ be 
the induced homomorphism, and let $[\varphi|_T]\in \outf(T)$ be the class 
of $\varphi|_T\in\autf(T)$. Then for $x\in P$ and $c_x\in\Aut_P(T)$, 
$(\varphi|_T)c_x(\varphi|_T)^{-1}=c_{\varphi(x)}\in\Aut_Q(T)$. So if we 
identify $\Out_P(T)\cong P/\5T$ and $\Out_Q(T)\cong Q/\5T$, then 
$\5\varphi$ is conjugation by $[\varphi|_T]$, and hence a morphism in 
$\calf_{\Out_S(T)}(\outf(T))$.

Conversely, if conjugation by the class of $\psi\in\autf(T)$ sends 
$\Out_P(T)$ into $\Out_Q(T)$ for $P,Q\ge \5T$, then $\psi$ extends to some 
$\psi^*\in\homf(P,Q)$ since $T\nsg S$ is receptive (Lemma 
\ref{f.c.=>receptive}(b)), and hence 
$c_{[\psi]}\in\Hom_{\outf(T)}(\Out_P(T),\Out_Q(T))$ is identified with 
$\5\psi^*\in\Hom_{\calf/\5T}(P/\5T,Q/\5T)$. 
\end{proof}

In \cite[\S\,3]{BCGLO2}, a fusion subsystem $\calf_0\le\calf$ over $U\le S$ 
is defined to have \emph{$p$-power index} if $U\ge\hyp(\calf)$ and 
$\Aut_{\calf_0}(P)\ge O^p(\autf(P))$ for each $P\le U$. By \cite[Theorem 
I.7.4]{AKO}, if $\calf$ is saturated, then for each $U\le S$ containing 
$\hyp(\calf)$, there is a unique saturated fusion subsystem 
$\calf_U\le\calf$ over $U$ of $p$-power index in $\calf$, and 
$\calf_U\nsg\calf$ if $U\nsg S$. 

\begin{Prop} \label{O^p(F)}
If $\calf$ is a saturated fusion system over a $p$-group $S$, and 
$T\nsg S$ is strongly closed in $\calf$, then 
	\[ \hyp(\calf) \le \{x\in S\,|\, c_x\in O^p(\autf(T))\Inn(T)\}. \]
If, in addition, $\cale\nsg\calf$ and $\calf_0\nsg\calf$ are normal 
subsystems over $T$ and $U$, respectively, where $U\ge T\cdot\hyp(\calf)$ and 
$\calf_0$ has $p$-power index in $\calf$, then $\cale\le\calf_0$.
\end{Prop}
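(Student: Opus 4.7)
For the first assertion, the plan is to pass to the quotient $\calf/TC_S(T)$, which is well defined because $TC_S(T)$ is strongly closed in $\calf$ by Lemma~\ref{l:O^p(F)}(b). Part~(a) of the same lemma, applied with $TC_S(T)$ in place of $T$, identifies
\[ \hyp\bigl(\calf/TC_S(T)\bigr) \;=\; TC_S(T)\cdot\hyp(\calf)\big/TC_S(T), \]
i.e.\ the image of $\hyp(\calf)$ under $S\twoheadrightarrow S/TC_S(T)$. By Lemma~\ref{l:O^p(F)}(c) this quotient fusion system is isomorphic to the honest group fusion system $\calf_{\Out_S(T)}(\outf(T))$, whose hyperfocal subgroup, by Puig's classical hyperfocal theorem, is $\Out_S(T)\cap O^p(\outf(T))$ (note that $\Out_S(T)$ is Sylow in $\outf(T)$ because strong closure of $T$ makes $T$ fully normalized in $\calf$). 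Since $O^p(\outf(T))=O^p(\autf(T))\Inn(T)/\Inn(T)$, unwinding the isomorphism $S/TC_S(T)\cong\Out_S(T)$ gives $c_x\in O^p(\autf(T))\Inn(T)$ for every $x\in\hyp(\calf)$, which is the required containment.

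For the second assertion, the plan is to verify $\Aut_\cale(P)\le\Aut_{\calf_0}(P)$ for every $P\le T$ that is $\cale$-centric, $\cale$-radical, and fully normalized in $\cale$. By Alperin's fusion theorem (Theorem~\ref{AFT}) applied to the saturated fusion system $\cale$, such automorphism groups, together with the inclusions $P\hookrightarrow Q$ for $P\le Q\le T$ (which lie in $\calf_0$ automatically since $T\le U$ and every fusion system over $U$ contains all inclusions between subgroups of $U$), generate $\cale$; so this containment will imply $\cale\le\calf_0$. Fix such a $P$. By Lemma~\ref{f.c.=>receptive}(b) applied in $\cale$, $P$ is fully automized in $\cale$, hence $\Aut_T(P)\in\sylp{\Aut_\cale(P)}$, and combined with the fact that $\Aut_\cale(P)/O^p(\Aut_\cale(P))$ is a $p$-group this yields the decomposition
\[ \Aut_\cale(P) \;=\; O^p\bigl(\Aut_\cale(P)\bigr)\cdot\Aut_T(P). \]
The first factor satisfies $O^p(\Aut_\cale(P))\le O^p(\autf(P))\le\Aut_{\calf_0}(P)$ by the $p$-power-index hypothesis on $\calf_0$, and the second satisfies $\Aut_T(P)\le\Aut_U(P)\le\Aut_{\calf_0}(P)$ because $T\le U$ and $\calf_0$ is a fusion system over $U$. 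Multiplying these inclusions gives $\Aut_\cale(P)\le\Aut_{\calf_0}(P)$, as required.

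The two nontrivial ingredients beyond what the paper has already set up are Puig's hyperfocal theorem for group fusion systems (in part~(a)) and the observation that $\cale$-saturation at $P$ forces $\Aut_T(P)$ to be Sylow in $\Aut_\cale(P)$ (in part~(b)); after these inputs, both halves reduce to routine bookkeeping. Note that part~(a) is not formally invoked in the proof of part~(b): the hypothesis $U\ge T\cdot\hyp(\calf)$ already delivers the two inclusions $T\le U$ and $\hyp(\calf)\le U$ that the argument requires.
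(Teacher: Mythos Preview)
Your proof is correct and follows essentially the same approach as the paper: pass to $\calf/TC_S(T)\cong\calf_{\Out_S(T)}(\outf(T))$ via Lemma~\ref{l:O^p(F)}(b,c), apply Puig's hyperfocal theorem there, and pull back using Lemma~\ref{l:O^p(F)}(a); then for the second part, use Alperin's fusion theorem in $\cale$ together with the factorization $\Aut_\cale(P)=\Aut_T(P)\cdot O^p(\Aut_\cale(P))$ at fully normalized $P$. The only cosmetic difference is that the paper works with all fully normalized $P\le T$ rather than restricting to $\cale$-centric radical ones, and it records the slightly stronger equality $\{x\in S\mid c_x\in O^p(\autf(T))\Inn(T)\}=TC_S(T)\cdot\hyp(\calf)$ along the way.
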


\begin{proof} Set $\5T=TC_S(T)$ and $Q=\{x\in S\,|\,c_x\in 
O^p(\autf(T))\Inn(T)\}\ge\5T$, for short, and let 
$\omega\:S/\5T\Right2{\cong}\Out_S(T)$ be the natural isomorphism. By Lemma 
\ref{l:O^p(F)}(b), $\5T$ is strongly closed in $\calf$, and by Lemma 
\ref{l:O^p(F)}(c), $\omega$ induces an isomorphism of fusion systems 
$\calf/\5T\cong\calf_{\Out_S(T)}(\outf(T))$. Puig's hyperfocal theorem for 
groups now implies that 
	\begin{align*} 
	\omega(\hyp(\calf/\5T)) &= \hyp\bigl(\calf_{\Out_S(T)}(\outf(T))\bigr)
	= O^p(\outf(T))\cap\Out_S(T) \\
	&= \bigl(O^p(\autf(T))\Inn(T) \cap \Aut_S(T)\bigr) 
	\big/ \Inn(T) \\
	&= \Aut_Q(T)/\Inn(T) = \omega(Q/\5T).
	\end{align*}
Hence $\hyp(\calf/\5T)=Q/\5T$, so $Q=\5T\cdot\hyp(\calf)$ by Lemma 
\ref{l:O^p(F)}(a). 

Now let $U\nsg S$ be any normal subgroup containing $T\cdot\hyp(\calf)$, and 
assume that $\cale$ and $\calf_0$ are normal subsystems in $\calf$ over $T$ 
and $U$, respectively, where $\calf_0$ has $p$-power index in $\calf$. If 
$P\le T$ is fully normalized in $\cale$, then 
$\Aut_\cale(P)=\Aut_T(P)O^p(\Aut_\cale(P))$ since 
$\Aut_T(P)\in\sylp{\Aut_\cale(P)}$, $\Aut_T(P)\le\Aut_{\calf_0}(P)$ since 
$\calf_0$ is a fusion system over $U\ge T$, and $O^p(\Aut_\cale(P))\le 
O^p(\autf(P))\le\Aut_{\calf_0}(P)$ where the second inclusion holds since 
$\calf_0\nsg\calf$ has $p$-power index. Hence 
$\Aut_\cale(P)\le\Aut_{\calf_0}(P)$ for all such $P$, and $\cale\le\calf_0$ 
since $\cale$ is generated by such automorphisms by Alperin's fusion 
theorem. 
\end{proof}

The next lemma will be needed to show that certain subnormal systems 
are normal, when iterating inductively Propositions \ref{O^p'(F)} and 
\ref{O^p(F)}.

\begin{Lem}[{\cite[7.4]{A-gfit}}] \label{F2<|F1<|F}
Let $\calf_2\nsg\calf_1\nsg\calf$ be saturated fusion systems over 
$p$-groups $S_2\nsg S_1\nsg S$. Assume, for each $\alpha\in\autf(S_1)$, 
that $\9\alpha\calf_2=\calf_2$. Assume also that $C_S(S_2)\le S_2$. Then 
$\calf_2\nsg\calf$.
\end{Lem}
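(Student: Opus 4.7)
The goal is to verify the four conditions in Definition \ref{d:E<|F} (strong closure, invariance, Frattini, extension) for the pair $\calf_2\nsg\calf$. The unifying strategy is to use the Frattini condition for $\calf_1\nsg\calf$ to reduce every morphism of $\calf$ to one in $\calf_1$, then invoke the fact that $\calf_2\nsg\calf_1$ together with the hypothesis $\9\alpha\calf_2=\calf_2$ for all $\alpha\in\autf(S_1)$.

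First I would observe the useful preliminary: for every $\alpha\in\autf(S_1)$, the equality $\9\alpha\calf_2=\calf_2$ forces $\alpha(S_2)=S_2$ (both sides are fusion systems, and they must live over the same $p$-group). Combined with the fact that $S_2$ is strongly closed in $\calf_1$, this gives strong closure of $S_2$ in $\calf$ as follows: given $P\le S_2$ and $\varphi\in\homf(P,S)$, strong closure of $S_1$ gives $\varphi(P)\le S_1$, and then the Frattini condition for $\calf_1\nsg\calf$ writes $\varphi=\beta\circ\varphi_1$ with $\beta\in\autf(S_1)$ and $\varphi_1\in\Hom_{\calf_1}(P,S_1)$; strong closure of $S_2$ in $\calf_1$ makes $\varphi_1(P)\le S_2$, and then $\varphi(P)=\beta(\varphi_1(P))\le\beta(S_2)=S_2$.

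The invariance and Frattini conditions both follow the same template. For invariance, given $\alpha\in\autf(S_2)\subseteq\Hom_\calf(S_2,S_1)$, the Frattini condition for $\calf_1\nsg\calf$ decomposes $\alpha=\beta\circ\alpha_0$ with $\beta\in\autf(S_1)$ and $\alpha_0\in\Hom_{\calf_1}(S_2,S_1)$; strong closure of $S_2$ in $\calf_1$ forces $\alpha_0\in\Aut_{\calf_1}(S_2)$, and $\beta|_{S_2}\in\Aut(S_2)$ by the preliminary observation. Then $\9\alpha\calf_2=\9\beta(\9{\alpha_0}\calf_2)=\9\beta\calf_2=\calf_2$, using the invariance condition for $\calf_2\nsg\calf_1$ and the hypothesis on $\beta$. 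For the Frattini condition, given $\varphi\in\Hom_\calf(P,S_2)$ with $P\le S_2$, I would similarly decompose $\varphi=\beta\circ\varphi_1$ via $\calf_1\nsg\calf$, note $\varphi_1(P)\le S_2$ as above, then apply the Frattini condition for $\calf_2\nsg\calf_1$ to write $\varphi_1=\gamma\circ\varphi_0$ with $\gamma\in\Aut_{\calf_1}(S_2)$ and $\varphi_0\in\Hom_{\calf_2}(P,S_2)$. Setting $\alpha=\beta|_{S_2}\circ\gamma\in\autf(S_2)$ gives $\varphi=\alpha\circ\varphi_0$ as required.

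The extension condition is immediate: the hypothesis $C_S(S_2)\le S_2$ gives $C_S(S_2)=Z(S_2)$ and $S_2C_S(S_2)=S_2$, so each $\alpha\in\Aut_{\calf_2}(S_2)$ trivially extends to itself, and $[\alpha,C_S(S_2)]=[\alpha,Z(S_2)]\le Z(S_2)$ holds automatically. The only mild obstacle is the bookkeeping in the preliminary observation that $\beta(S_2)=S_2$ for $\beta\in\autf(S_1)$; everything else is a clean two-step Frattini-style reduction along the chain $\calf_2\nsg\calf_1\nsg\calf$.
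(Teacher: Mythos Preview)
Your proposal is correct and matches the approach in the paper. In the published text the lemma is simply cited from \cite[7.4]{A-gfit} without proof, but the LaTeX source contains a suppressed proof (after \verb|\end{document}|, inside an \verb|\iffalse| block) that proceeds exactly as you do: strong closure via the Frattini condition for $\calf_1\nsg\calf$ plus $\alpha(S_2)=S_2$; the Frattini condition by a two-step reduction through $\calf_1$; and the extension condition trivially from $C_S(S_2)\le S_2$. The only variation is in the invariance step: the paper's suppressed argument decomposes $\beta\in\autf(S_2)$ via the group-theoretic Frattini argument as $\beta=\beta_1\beta_2$ with $\beta_1\in N_{\autf(S_2)}(\Aut_{S_1}(S_2))$ and $\beta_2\in\Aut_{\calf_1}(S_2)$, then extends $\beta_1$ to $\autf(S_1)$ by receptiveness, whereas you use the Frattini condition for $\calf_1\nsg\calf$ directly---your route is slightly more streamlined and avoids the extra extension step.
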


For an arbitrary saturated fusion system $\calf$, let 
$\calf^\infty\nsg\calf$ be the limit after applying $O^p(-)$ and 
$O^{p'}(-)$ until the sequence stabilizes. More precisely, set 
$\calf^\infty=\bigcap_{i=0}^\infty\calf^{(i)}$, where the sequence 
$\{\calf^{(i)}\}$ is defined by setting $\calf^{(0)}=\calf$ and 
$\calf^{(i+1)}=O^{p'}(O^p(\calf^{(i)}))$. 

\begin{Lem} \label{unique-Finfty}
Let $\calf=\calf_0>\calf_1>\cdots>\calf_m=\cale$ be any sequence of 
saturated fusion systems, each normal in $\calf$ and each of $p$-power 
index or of index prime to $p$ in the preceeding one. Then 
$\calf^\infty=\cale^\infty$.
\end{Lem}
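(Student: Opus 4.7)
My plan is to induct on $m$. The base case $m=0$ is trivial, and for the inductive step it suffices to prove the case $m=1$: whenever $\cale \nsg \calf$ is a normal subsystem of $p$-power index or of index prime to $p$, we have $\cale^\infty = \calf^\infty$. Applying this to each pair $(\calf_{i-1}, \calf_i)$ along the given chain yields the result in general.

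The first ingredient is that $\calf^\infty$ is stable under both $O^p$ and $O^{p'}$. Because the sequence $\{\calf^{(i)}\}$ is weakly decreasing and consists of subsystems of the finite $p$-group $S$, it stabilizes at some $\calf^{(n)} = \calf^{(n+1)} = \calf^\infty$, and then the defining relation $\calf^\infty = O^{p'}(O^p(\calf^\infty)) \le O^p(\calf^\infty) \le \calf^\infty$ forces $O^p(\calf^\infty) = \calf^\infty$, after which $O^{p'}(\calf^\infty) = O^{p'}(O^p(\calf^\infty)) = \calf^\infty$. The same is true of $\cale^\infty$ computed inside $\cale$. The second ingredient is monotonicity: for saturated subsystems $\calg_1 \le \calg_2$ one has $O^p(\calg_1) \le O^p(\calg_2)$ and $O^{p'}(\calg_1) \le O^{p'}(\calg_2)$. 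In groups this is immediate from the description of $O^p$ as the subgroup generated by $p'$-elements and of $O^{p'}$ as the subgroup generated by $p$-elements; the fusion system analogue follows from the facts that $\hyp$ is monotone under inclusion and that the automizer conditions defining $O^p(\calf)$ and $O^{p'}(\calf)$ via Propositions \ref{O^p'(F)} and \ref{O^p(F)} (together with Alperin's fusion theorem) are themselves monotone.

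Given these two ingredients, the equality splits into two inclusions. Monotonicity applied term-by-term to $\cale \le \calf$ yields $\cale^{(i)} \le \calf^{(i)}$ for every $i$, hence $\cale^\infty \le \calf^\infty$. For the reverse, if $\cale$ has $p$-power index then $\cale \ge O^p(\calf)$, and stability gives $\calf^\infty = O^p(\calf^\infty) \le O^p(\calf) \le \cale$; if $\cale$ has index prime to $p$ then $\cale \ge O^{p'}(\calf)$, and $\calf^\infty = O^{p'}(\calf^\infty) \le O^{p'}(\calf) \le \cale$. Either way $\calf^\infty \le \cale$. Applying stability and monotonicity inside $\cale$, $\calf^\infty = O^p(\calf^\infty) \le O^p(\cale)$, then $\calf^\infty = O^{p'}(\calf^\infty) \le O^{p'}(O^p(\cale)) = \cale^{(1)}$, and iterating, $\calf^\infty \le \cale^{(i)}$ for every $i$, so $\calf^\infty \le \cale^\infty$.

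The main technical obstacle I anticipate is the normality bookkeeping required to invoke the two ingredients at each stage: one needs $\calf^{(i)} \nsg \calf$ and $\cale^{(i)} \nsg \cale$ for every $i$, and then $\calf^\infty \nsg \cale$ (which comes for free from $\calf^\infty \nsg \calf$ together with $\calf^\infty \le \cale$, since normality descends through an intermediate subsystem). For the first assertion I would argue inductively using Lemma \ref{F2<|F1<|F}, whose hypotheses can be verified because each $\calf^{(i)}$ is built by applying the ``characteristic'' operations $O^p$ and $O^{p'}$ to a subsystem already normal in $\calf$, so is preserved by each $\alpha \in \autf(S)$, and the centralizer condition is controlled by strongly closed $p$-subgroups at each stage. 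Once these normality statements are in place the monotonicity and stability steps above go through cleanly.
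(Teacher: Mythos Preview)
Your overall architecture (stability of $\calf^\infty$ under $O^p$ and $O^{p'}$, plus monotonicity of these operators, then squeeze) is sensible, but the monotonicity of $O^{p'}$ is exactly where the real content lies, and your justification does not establish it. The operator $O^{p'}(\calf)$ is \emph{not} simply the fusion system generated by $\Inn(S)$ together with the $O^{p'}(\autf(P))$; it is defined via a global map $\theta\colon\Mor(\calf^c)\to\autf(S)/O^{p'}_*(\calf)$ (as in \cite[Theorem~5.4]{BCGLO2} or \cite[Theorem~I.7.7]{AKO}), and a morphism lies in $O^{p'}(\calf)$ iff its $\theta$-image is trivial. To get $O^{p'}(\calg_1)\le O^{p'}(\calg_2)$ from $\calg_1\le\calg_2$ you would need the two $\theta$-maps to be compatible, and this does not follow from ``automizer conditions are monotone.'' Neither Proposition~\ref{O^p'(F)} nor Proposition~\ref{O^p(F)} says anything about monotonicity; they construct specific normal subsystems under additional hypotheses. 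So the step ``$\calf^\infty=O^{p'}(\calf^\infty)\le O^{p'}(\calf)$'' (and its iterates inside $\cale$) is unproved. Your normality bookkeeping is also thinner than you suggest: Lemma~\ref{F2<|F1<|F} has a centralizer hypothesis $C_S(S_2)\le S_2$ that you have not verified for the intermediate $\calf^{(i)}$, and ``normality descends through an intermediate subsystem'' is not automatic for fusion systems.

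The paper's proof avoids monotonicity altogether. It inducts on $|\Mor(\calf)|$ and treats the $m=1$ case directly. When $\cale\nsg\calf$ has $p$-power index, it cites the identity $O^p(\cale)=O^p(\calf)$ from \cite[Theorem~7.53(ii)]{Craven}, which immediately gives $\cale^{(1)}=\calf^{(1)}$ and hence $\cale^\infty=\calf^\infty$. When $\cale$ has index prime to $p$, it sets $T=\hyp(\calf)$, takes the unique $p$-power-index subsystem $\cale_T\nsg\cale$ over $T$, checks via the explicit generation in \cite[Theorem~I.7.4]{AKO} that $\cale_T\le O^p(\calf)$, uses Lemma~\ref{F2<|F1<|F} to get $\cale_T\nsg\calf$ (hence $\cale_T\nsg O^p(\calf)$ of index prime to $p$), and then invokes the induction hypothesis on the strictly smaller system $O^p(\calf)$. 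This route never needs $O^{p'}$ to be monotone; the only comparison used is the containment $\cale_T\le O^p(\calf)$, which is checked by hand from generators.
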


\begin{proof}
We prove that $\cale^\infty=\calf^\infty$ by induction on $|\Mor(\calf)|$. 
In particular, it suffices to do this when $m=1$. If $\cale\nsg\calf$ has 
$p$-power index, then $O^p(\cale)=O^p(\calf)$ \cite[Theorem 
7.53(ii)]{Craven}, and $\cale^\infty=\calf^\infty$ by definition. So assume $\cale$ has index 
prime to $p$ in $\calf$. Set $T=\hyp(\calf)$. Since $\hyp(\cale)\le T$ by 
definition, there is a unique normal subsystem $\cale_T\nsg\cale$ of 
$p$-power index over $T$, and by \cite[Theorem I.7.4]{AKO}, 
$\cale_T=\gen{\Inn(T),O^p(\Aut_\cale(P))\,|\,P\le T}\le O^p(\calf)$. Also, 
$\cale_T\nsg\calf$ by Lemma \ref{F2<|F1<|F} and the uniqueness of $\cale_T$ 
(and since $T\nsg S$). Hence $\cale_T\nsg 
O^p(\calf)$, and $\cale_T$ has index prime to $p$ in $O^p(\calf)$ since 
they are both fusion systems over $T$ \cite[Lemma 1.26]{AOV1}. Hence 
	\[ \cale^\infty = (\cale_T)^\infty = 
	(O^{p'}(O^p(\calf)))^\infty = \calf^\infty, \]
where the second equality holds by the induction hypothesis.
\end{proof}

We are now ready to combine Propositions \ref{O^p'(F)} and \ref{O^p(F)} to 
get the following:

\begin{Thm} \label{t:p-solv}
Let $\cale\nsg\calf$ be saturated fusion systems over $p$-groups $T\nsg S$. 
Assume that $\autf(T)/\Aut_\cale(T)$ is $p$-solvable (equivalently, that 
$\outf(T)$ is $p$-solvable). Then there is a normal saturated subsystem 
$\calf_0\nsg\calf$ over $TC_S(T)$, such that $\calf_0\ge\cale$, 
$\calf_0\ge\calf^\infty$, $(\calf_0)^\infty=\calf^\infty$, and 
$\Aut_{\calf_0}(T)=\Aut_\cale(T)$.
\end{Thm}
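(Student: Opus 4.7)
The plan is to proceed by induction on the lexicographic pair
$\bigl(|\autf(T)/\Aut_\cale(T)|,\,|S/TC_S(T)|\bigr)$, alternately invoking Propositions \ref{O^p'(F)} and \ref{O^p(F)}.

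For the base case $\autf(T)=\Aut_\cale(T)$ and $S=TC_S(T)$, I would take $\calf_0=\calf$; all four conditions hold trivially. Otherwise, exactly one of the following two cases applies.

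\emph{Case 1:} $\autf(T)/\Aut_\cale(T)$ is not a $p$-group. Since it is $p$-solvable, it has a nontrivial largest $p'$-quotient; let $K\le\autf(T)$ be the preimage of $O^{p'}(\autf(T)/\Aut_\cale(T))$, so $\Aut_\cale(T)\le K<\autf(T)$ and $\autf(T)/K$ is a nontrivial $p'$-group. Applying Proposition \ref{O^p'(F)} with $\chi_0\colon\autf(T)\twoheadrightarrow\autf(T)/K$ yields $\calf_1\nsg\calf$ over $S$ with $\calf_1\ge\cale$ and $\Aut_{\calf_1}(T)=K$. The first component of the measure strictly decreases, so induction applied to $\cale\nsg\calf_1$ produces $\calf_0\nsg\calf_1$ with the desired properties.

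\emph{Case 2:} $\autf(T)/\Aut_\cale(T)$ is a $p$-group (possibly trivial) and $S>TC_S(T)$. Then $\Out_S(T)\ne 1$ is a Sylow $p$-subgroup of the $p$-solvable group $\outf(T)$, so $\outf(T)$ is not a $p'$-group and hence has a nontrivial $p$-quotient, which by Sylow forces $\Out_S(T)\not\le O^p(\outf(T))$. Combined with Proposition \ref{O^p(F)}, this shows that $U:=TC_S(T)\cdot\hyp(\calf)$ satisfies $U/TC_S(T)\le\Out_S(T)\cap O^p(\outf(T))<\Out_S(T)$, so $U<S$. Proposition \ref{O^p(F)} then provides the unique normal subsystem $\calf_U\nsg\calf$ of $p$-power index over $U$, with $\calf_U\ge\cale$. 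Since $TC_U(T)=TC_S(T)$ and $|U/TC_S(T)|<|S/TC_S(T)|$, the second component of the measure strictly decreases, so induction applied to $\cale\nsg\calf_U$ produces $\calf_0\nsg\calf_U$ with the desired properties.

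To conclude $\calf_0\nsg\calf$ in both cases, I would invoke Lemma \ref{F2<|F1<|F} applied to $\calf_0\nsg\calf_1\nsg\calf$ (or $\calf_0\nsg\calf_U\nsg\calf$); the centralizer condition $C_S(TC_S(T))\le C_S(T)\le TC_S(T)$ holds immediately. The main obstacle --- and the step requiring the most care --- is the invariance hypothesis $\9\alpha\calf_0=\calf_0$ for every $\alpha\in\autf(S)$ (respectively $\autf(U)$). I would establish this by arguing that the recursive construction of $\calf_0$ is canonical: in Case 1 the subgroup $K$ is built functorially from $\Aut_\cale(T)$ and $\autf(T)$, and $\Aut_\cale(T)$ is $\autf(S)$-invariant by the invariance condition for $\cale\nsg\calf$; in Case 2 the subsystem $\calf_U$ is uniquely determined by its support $U$ together with having $p$-power index, and $U$ is built from the $\autf(S)$-invariant subgroups $TC_S(T)$ and $\hyp(\calf)$. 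Hence at every level of the recursion the data defining $\calf_0$ is preserved by the ambient $\autf(S)$-action, yielding the required invariance. Finally, $\calf_0\ge\calf^\infty$ holds since each reduction contains $O^p(\calf)$ or $O^{p'}(\calf)$, and $(\calf_0)^\infty=\calf^\infty$ follows from Lemma \ref{unique-Finfty}.
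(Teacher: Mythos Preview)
Your overall strategy matches the paper's: alternate Propositions \ref{O^p'(F)} and \ref{O^p(F)}, then invoke Lemmas \ref{F2<|F1<|F} and \ref{unique-Finfty}. The paper organizes this iteratively, fixing in advance a chain $\Aut_\cale(T)=G_0<\cdots<G_m=\autf(T)$ with each $G_i\nsg\autf(T)$ and $G_i/G_{i-1}$ a $p$- or $p'$-group, whereas you recurse. Two points need repair.

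In Case 2 your inference ``$\outf(T)$ is not a $p'$-group and hence has a nontrivial $p$-quotient'' is false in general: take $S_3$ with $p=3$, which is $3$-solvable, not a $3'$-group, yet has no nontrivial $3$-quotient. What actually produces the nontrivial $p$-quotient is your Case 2 hypothesis itself: $\outf(T)/\Out_\cale(T)\cong\autf(T)/\Aut_\cale(T)$ is a $p$-group, and it is nontrivial because $S>TC_S(T)$ gives $\Aut_S(T)>\Inn(T)$ while $\Aut_S(T)\cap\Aut_\cale(T)=\Inn(T)$ (the latter since $\Inn(T)\in\Syl_p(\Aut_\cale(T))$). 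In fact this shows $O^p(\outf(T))\le\Out_\cale(T)$, a $p'$-group, so $U=TC_S(T)$ and Case 2 finishes in a single step.

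Your invariance argument is also not well-founded as written. The inductive hypothesis asserts only the \emph{existence} of some $\calf_0\nsg\calf_1$, with no uniqueness or canonicity, so you cannot conclude $\9\alpha\calf_0=\calf_0$ for $\alpha\in\autf(S)$ lying outside $\Aut_{\calf_1}(S)$. The paper's iterative setup avoids this cleanly: since each $G_{i-1}$ is normal in $\autf(T)$, every $\alpha\in\autf(S_i)$ has $\alpha|_T$ normalizing $G_{i-1}$, and then the uniqueness clause in Proposition \ref{O^p'(F)} (respectively, uniqueness of the $p$-power-index subsystem over a given $S_{i-1}$) yields $\9\alpha\calf_{i-1}=\calf_{i-1}$, so Lemma \ref{F2<|F1<|F} gives $\calf_{i-1}\nsg\calf$ directly at each step. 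If you want to keep the recursive framing, you must build the required invariance (under all $\alpha\in\Aut(S,\calf)$ normalizing $\cale$, say) into the inductive statement itself.
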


\begin{proof} Choose a sequence of subgroups 
$\autf(T)=G_m>G_{m-1}>\cdots>G_0=\Aut_\cale(T)$, all normal in $G_m$, and 
such that for each $i$, $G_i/G_{i-1}$ is a $p$-group or a $p'$-group. For 
each $i$, set $S_i=N_S^{G_i}(T)=\{x\in S\,|\,c_x\in G_i\}$. Thus $S_i\nsg 
S$ and $\Aut_{S_i}(T)\in\sylp{G_i}$ for each $i$, $S_m=S$, and $S_0=TC_S(T)$. 

We claim that there are subsystems 
$\calf=\calf_m>\calf_{m-1}>\cdots>\calf_0=\cale$ in $\calf$, each normal 
in $\calf$ and of $p$-power index or of index prime to $p$ in the 
preceeding one, where $\calf_i$ is over $S_i$, and such that 
$\Aut_{\calf_i}(T)=G_i$ for each $i$. If this holds, then 
$(\calf_0)^\infty=\calf^\infty$ by Lemma \ref{unique-Finfty}, and 
$\calf_0$ satisfies all of the other conditions listed above.

Assume $\calf_i\nsg\calf$ has been constructed as claimed, with $i\ge1$ and 
$\calf_i\ge\cale$. If $G_i/G_{i-1}$ is a $p'$-group, then $S_{i-1}=S_i$. 
Let $\calf_{i-1}\nsg\calf_i$ be as in Proposition \ref{O^p'(F)}. In 
particular, $\calf_{i-1}$ has index prime to $p$ in $\calf_i$ and contains 
$\cale$. For each $\alpha\in\autf(S_i)$, $\alpha|_T\in\autf(T)=G_m$ 
normalizes $G_{i-1}=\Aut_{\calf_{i-1}}(T)$, and hence $\alpha$ normalizes 
$\calf_{i-1}$ by the uniqueness in Proposition \ref{O^p'(F)}. So 
$\calf_{i-1}\nsg\calf$ by Lemma \ref{F2<|F1<|F}.

If $G_i/G_{i-1}$ is a $p$-group, then $\hyp(\calf_i)\le S_{i-1}$ by 
Proposition \ref{O^p(F)}, so there is a unique normal subsystem 
$\calf_{i-1}\nsg\calf_i$ over $S_{i-1}$ of $p$-power index (see \cite[Theorem 
I.7.4]{AKO}), and $\calf_{i-1}\ge\cale$ by Proposition \ref{O^p(F)} again. 
For each $\alpha\in\autf(S_i)$, $\alpha(S_{i-1})=S_{i-1}$ since 
$S_{i-1}=N_S^{G_{i-1}}(T)$ and 
$\alpha|_T\in G_m$ normalizes $G_{i-1}$, so $\alpha$ normalizes 
$\calf_{i-1}$ by its uniqueness. Thus $\calf_{i-1}\nsg\calf$ by Lemma 
\ref{F2<|F1<|F}. 
\end{proof}

\bigskip

\section{Reductions to centric normal subsystems}

In order to get further results, we must also work with the linking system 
associated to a fusion system. When $G$ is a group and $P,Q\le G$, we set 
$T_G(P,Q)=\{g\in G\,|\,\9gP\le Q\}$ (the \emph{transporter set}). 
When $\calf$ is a saturated fusion system 
over a $p$-group $S$, a \emph{centric linking system} associated to $\calf$ 
is a category $\call$ with $\Ob(\call)=\calf^c$, the set of $\calf$-centric 
subgroups of $S$, together with a pair of functors
	\[ \calt_{\calf^c}(S) \Right4{\delta} \call \Right4{\pi} \calf \]
which satisfy certain axioms listed in \cite[Definition III.4.1]{AKO} and 
\cite[Definition 1.7]{BLO2}. Here, $\calt_{\calf^c}(S)$ is the 
\emph{transporter category} of $S$: the category with object set $\calf^c$, 
and with $\Mor_{\calt_{\calf^c}(S)}(P,Q)=T_S(P,Q)$ 
(where composition is given by multiplication in $S$). Also, 
$\delta$ is the identity on objects and injective on morphism 
sets, $\pi$ is the inclusion on objects and surjective on morphism sets, 
and $\pi\circ\delta$ sends $g\in T_S(P,Q)$ to 
$c_g\in\homf(P,Q)$. The motivating example is the category 
$\call_S^c(G)$, when $G$ is a finite group and $S\in\sylp{G}$, where 
	\begin{align*} 
	\Ob(\call_S^c(G)) &= \calf_S(G)^c = \{P\le S \,|\, 
	C_G(P)=Z(P)\times O_{p'}(C_G(P)) \} \\
	\Mor_{\call_S^c(G)}(P,Q) &=  T_G(P,Q)\big/ O_{p'}(C_G(P)) .
	\end{align*}
By comparison, note that $\Hom_{\calf_S(G)}(P,Q)\cong T_G(P,Q)/C_G(P)$. 

Since we are working with extensions of fusion and linking systems, we also 
need to handle their automorphism groups. Automorphisms of fusion systems 
are straightforward. When $\calf$ is a saturated fusion system over a 
$p$-group $S$, an automorphism $\alpha\in\Aut(S)$ is said to be 
\emph{fusion preserving} if it induces an automorphism of the category 
$\calf$, and we set 
	\begin{align*} 
	\Aut(S,\calf) &= \bigl\{\alpha\in\Aut(S)\,\big|\, 
	\textup{$\alpha$ is fusion preserving}\bigr\} \\
	\Out(S,\calf) &= \Aut(S,\calf)/\autf(S).
	\end{align*}

Let $\call$ be a centric linking system associated to $\calf$, and let 
$\delta$ be the functor described above. For each $P\in\calf^c=\Ob(\call)$, 
set $\iota_P=\delta_{P,S}(1)\in\Hom_\call(P,S)$ (the ``inclusion'' of $P$ 
in $S$ in the category $\call$), and set 
$[\![P]\!]=\delta_P(P)\le\Aut_\call(P)$. Define
	\begin{align*} 
	\Aut\typ^I(\call) &= \bigl\{ \beta\in\Aut(\call) \,\big|\, 
	\beta(\iota_P)=\iota_{\beta(P)},~
	\beta([\![P]\!])=[\![\beta(P)]\!],~ \forall\, 
	P\in\calf^c \bigr\} \\
	\Out\typ(\call) &= \Aut\typ^I(\call)\big/\gen{c_\gamma\,|\,
	\gamma\in\Aut_\call(S)}. 
	\end{align*}
There is a natural homomorphism $\mu_\call\:\Out\typ(\call) 
\Right2{}\Out(S,\calf)$, defined by restriction to 
$[\![S]\!]\le\Aut_\call(S)$. 
We refer to \cite[\S\,III.4.3]{AKO} or \cite[\S\,1.3]{AOV1} for 
more details on these definitions. 

When $\calf$ is a saturated fusion system over a $p$-group $S$, it is 
straightforward to define the centralizer fusion system $C_\calf(P)$ of a 
subgroup $P\le S$ (see \cite[Definition I.5.3]{AKO}): this is a fusion 
subsystem over $C_S(P)$ which is saturated if $P$ is fully centralized in 
$\calf$ (i.e., receptive). One can also define $C_\calf(\cale)$ when 
$\cale$ is a normal subsystem in $\calf$ (see \cite[Chapter 6]{A-gfit}), 
but this is much more complicated. For our purposes here, it will suffice 
to work with the following somewhat simpler definition. If $\cale\nsg\calf$ 
are saturated fusion systems over $T\nsg S$, then by \cite[6.7]{A-gfit}, 
there is a (unique) subgroup $C_S(\cale)\le C_S(T)$ with the property that 
for $P\le C_S(T)$, $P\le C_S(\cale)$ if and only if $\cale\le C_\calf(P)$. 

\begin{Prop} \label{S/T<Out(L)}
Let $\cale\nsg\calf$ be a pair of saturated fusion systems over the 
$p$-groups $T\nsg S$ such that $C_S(\cale)\le T$. Let $\call$ be a 
centric linking system associated to $\cale$. Then the natural 
homomorphism $S\too\Aut(T,\cale)$ lifts to a homomorphism 
$\til\omega\:S\too\Aut\typ^I(\call)$, and this factors through an injective 
homomorphism $\omega\:S/T\too\Out\typ(\call)$.
\end{Prop}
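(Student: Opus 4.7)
The plan is to lift the conjugation map $S\to\Aut(T,\cale)$ to the linking-system level via the functoriality of $\call$ under fusion-preserving automorphisms, then use the hypothesis $C_S(\cale)\le T$ to prove injectivity of the induced quotient map.

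For each $s\in S$, conjugation on $T$ gives $c_s|_T\in\Aut_S(T)\subseteq\Aut_\calf(T)$, and by the invariance condition for $\cale\nsg\calf$ in Definition~\ref{d:E<|F}, this element is fusion preserving on $\cale$, so $c_s|_T\in\Aut(T,\cale)$. This is the natural map $S\to\Aut(T,\cale)$. To lift it to $\til\omega\colon S\to\Aut\typ^I(\call)$, I define $\til\omega(s)$ as the self-equivalence of $\call$ sending each $P\in\cale^c$ to $\9sP\in\cale^c$ (legitimate because $c_s|_T$ preserves $\cale^c$) and sending $\varphi\in\Mor_\call(P,Q)$ to the unique lift in $\Mor_\call(\9sP,\9sQ)$ of the conjugated $\cale$-morphism $c_s\circ\pi(\varphi)\circ c_s^{-1}$ that is compatible with $\delta$ and with the distinguished monomorphisms $[\![P]\!]$. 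Existence and uniqueness of such lifts follow from the axioms of a centric linking system; functoriality then yields $\til\omega(ss')=\til\omega(s)\til\omega(s')$ and the type-preservation conditions $\til\omega(s)(\iota_P)=\iota_{\9sP}$ and $\til\omega(s)([\![P]\!])=[\![\9sP]\!]$.

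For $t\in T$, the construction gives $\til\omega(t)=c_{\delta_T(t)}$, so $\til\omega(T)\subseteq\gen{c_\gamma\mid\gamma\in\Aut_\call(T)}$ and $\til\omega$ descends to $\omega\colon S/T\to\Out\typ(\call)$. For injectivity, suppose $\til\omega(s)=c_\gamma$ for some $\gamma\in\Aut_\call(T)$; projecting to $\Aut(T,\cale)$ gives $c_s|_T=\pi(\gamma)\in\Aut_\cale(T)$. Set $\alpha:=\pi(\gamma)$. By the extension condition for $\cale\nsg\calf$, $\alpha$ extends to $\4\alpha\in\Aut_\calf(TC_S(T))$ with $[\4\alpha,C_S(T)]\le Z(T)$. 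Since $TC_S(T)\nsg S$, the automorphism $\beta:=c_s\4\alpha^{-1}\in\Aut_\calf(TC_S(T))$ is defined and restricts to the identity on $T$. Using the characterization of $C_S(\cale)$ as the unique subgroup of $C_S(T)$ whose elements $x$ satisfy $\cale\le C_\calf(\gen{x})$, together with the compatibilities forced by $\cale\nsg\calf$, one shows $\beta$ is realized by conjugation by some $x\in C_S(\cale)$; then the hypothesis $C_S(\cale)\le T$ gives $x\in T$, and tracing through the identity $c_s=c_x\cdot\4\alpha$ forces $s\in T$.

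The main obstacle is the injectivity step: producing, from the abstract $\calf$-automorphism $\beta$ of $TC_S(T)$ trivial on $T$, an element of $C_S(\cale)$ that implements it; this is exactly where the characterization of $C_S(\cale)$ from \cite[6.7]{A-gfit} and the extension condition must be brought to bear, and where the hypothesis $C_S(\cale)\le T$ becomes decisive. The construction of $\til\omega$ in Step~2 is guided by the standard theory of lifts in centric linking systems but still requires care to ensure that the chosen lifts assemble into a genuine group homomorphism rather than merely a section modulo inner automorphisms.
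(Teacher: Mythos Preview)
Your proposal has genuine gaps in both steps.

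\textbf{Construction of $\til\omega$.} There is no ``unique lift in $\Mor_\call(\9sP,\9sQ)$'' of the conjugated fusion morphism: for $\varphi\in\Mor_\call(P,Q)$, the lifts of $c_s\pi(\varphi)c_s^{-1}$ to $\call$ differ by $\delta_{\9sP}(Z(\9sP))$, and compatibility with $\delta$ and the $[\![P]\!]$ only pins down the action on the image of $\delta$, not on all of $\Mor(\call)$. An element of $\Aut\typ^I(\call)$ is \emph{not} determined by its restriction to $[\![T]\!]\cong T$; indeed $\Ker(\mu_\call)$ measures exactly this ambiguity and can be nontrivial. So your lifts need not assemble into a group homomorphism $S\to\Aut\typ^I(\call)$, only into a set-theoretic section modulo this kernel. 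You acknowledge the difficulty in your last paragraph but do not resolve it.

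\textbf{Injectivity.} After the first line your argument uses only the fusion-level consequence $c_s|_T\in\Aut_\cale(T)$, and this is strictly weaker than $\til\omega(s)$ being inner in $\Aut\typ^I(\call)$. In fact $c_s|_T\in\Aut_\cale(T)$ holds for every $s\in C_S(T)$, yet such $s$ need not lie in $T$ even when $C_S(\cale)\le T$ (the inclusion $C_S(\cale)\le C_S(T)$ can be strict). Your claim that the automorphism $\beta=c_s\4\alpha^{-1}\in\Aut_\calf(TC_S(T))$, which is trivial on $T$, is ``realized by conjugation by some $x\in C_S(\cale)$'' is unsupported: $\beta$ need not be inner in $S$, and even if $\beta=c_x$ with $x\in C_S(T)$, nothing forces $\cale\le C_\calf(x)$.

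The paper avoids both problems by first reducing to $\calf=S\cale$ and invoking \cite[Proposition~1.31(a)]{AOV1} to obtain a pair of linking systems $\call\nsg\call^*$ associated to $\cale\nsg\calf$. Then $\til\omega(s)$ is \emph{defined} as conjugation by $\delta_S(s)$ inside $\call^*$, which is automatically a group homomorphism. For injectivity, after normalizing so that $c_{\delta_T(x)}=\Id_\call$ (hence $x\in C_S(T)$), the paper uses the extension property in $\call^*$ \cite[Proposition~4.e]{O-ext} to show that every $\cale$-isomorphism between centric subgroups extends to an $\calf$-morphism fixing $x$; by Alperin's theorem this gives $\cale\le C_\calf(x)$, so $x\in C_S(\cale)\le T$. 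The ambient linking system $\call^*$ is essential to both halves of the argument.
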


\begin{proof} Since the conclusion of the proposition involves only $S$ and 
$\cale$, we can assume that $\calf=S\cale$ as defined in \cite[Theorem 5, 
Chapter 8]{A-gfit}. In particular, $\cale\ge O^p(\calf)$. So by 
\cite[Proposition 1.31(a)]{AOV1}, there is a pair of linking systems 
$\call\nsg\call^*$ associated to $\cale\nsg\calf$, where 
$\Ob(\call)=\cale^c$, and $\Ob(\call^*)$ is the set of all $P\le S$ such 
that $P\cap T\in\Ob(\call)$. (This was shown in \cite{AOV1} only when 
$\cale=O^p(\calf)$, but the same argument applies in this situation.)

This inclusion $\call\nsg\call^*$ induces a natural homomorphism $\til\omega$ 
from $S$ to $\Aut\typ^I(\call)$, defined by conjugation in $\call^*$, and 
which factors through a homomorphism 
	\[ \omega\: S/T \Right4{} \Out\typ(\call) = 
	\Aut\typ^I(\call)\big/\{c_\gamma \,|\, 
	\gamma\in\Aut_\call(T) \}. \]
Assume $\omega$ is not injective, and let $x\in S\sminus T$ be such that 
$xT\in\Ker(\omega)$. Thus $\til\omega(x)=c_{\delta_T(x)}$ is conjugation by 
some element $\gamma\in\Aut_\call(T)$. Since $\til\omega(x)$ has $p$-power 
order, we can assume that $\gamma$ has $p$-power order, and hence 
$\gamma=\delta_T(y)$ for some $y\in T$. Upon replacing $x$ by $xy^{-1}$, we 
can arrange that $c_{\delta_T(x)}=\Id_\call$, and hence that $\delta_T(x)$ 
and its restrictions commute with all morphisms in $\call$. In particular, 
$x\in C_S(T)$.

Fix $P,Q\le T$ and $\psi\in\Iso_\call(P,Q)$. We just showed that 
$\delta_Q(x)\psi=\psi\delta_P(x)$. So by \cite[Proposition 4.e]{O-ext}, $\psi$ 
extends to an isomorphism $\4\psi\in\Iso_{\call^*}(P\gen{x},Q\gen{x})$. Set 
$y=\pi(\4\psi)(x)$, where $\pi(\4\psi)\in\homf(P\gen{x},Q\gen{x})$. By 
axiom (C) for a linking system \cite[Definition III.4.1]{AKO},
	\[ \4\psi\circ\delta_{P\gen{x}}(x) = 
	\delta_{Q\gen{x}}(y)\circ\4\psi. \]
But this is also equal to $\delta_{Q\gen{x}}(x)\circ\4\psi$ since 
extensions are unique in a linking system \cite[Proposition 4.e or 
4.f]{O-ext}, and so $\delta_{Q\gen{x}}(x)=\delta_{Q\gen{x}}(y)$. Since 
$\delta$ is injective by \cite[Proposition 4.c]{O-ext}, we have 
$x=y=\pi(\4\psi)(x)$.

Thus all isomorphisms in $\cale$ between objects in $\call$ extend to 
morphisms in $\calf$ which send $x$ to itself. Since $\Ob(\call)=\cale^c$, 
this statement holds for all morphisms in $\cale$ by Alperin's fusion 
theorem. So $\cale\le C_\calf(x)$, hence $x\in C_S(\cale)$, which 
contradicts the assumption that $C_S(\cale)\le T$. We conclude that 
$\omega$ is injective. 
\end{proof}

We saw in Theorem \ref{t:p-solv} the importance of getting control of the 
quotient group $TC_S(T)/T$, when $\cale\nsg\calf$ are saturated fusion 
systems over $T\nsg S$.

\begin{Cor} \label{S/T-abel}
Let $\cale\nsg\calf$ be saturated fusion systems over $T\nsg S$ such that 
$C_S(\cale)\le T$. Then $TC_S(T)/T$ is abelian, and $C_S(T)\le T$ if 
$p$ is odd. 
\end{Cor}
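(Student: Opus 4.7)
The plan is to use the injection $\omega\colon S/T\hookrightarrow\Out\typ(\call)$ from Proposition \ref{S/T<Out(L)}, lifted from the homomorphism $\til\omega\colon S\to\Aut\typ^I(\call)$. For each $x\in C_S(T)$, conjugation by $x$ is trivial on $T$, so $\til\omega(x)$ restricts to $\Id_T$ on $[\![T]\!]$ and thus lies in the subgroup
	\[ A \;=\; \bigl\{\beta\in\Aut\typ^I(\call)\bmid \beta|_{[\![T]\!]}=\Id_T\bigr\}. \]
If I can show that $A$ is abelian, then $\til\omega([x,y])=[\til\omega(x),\til\omega(y)]=1$ for all $x,y\in C_S(T)$; since the injectivity of $\omega$ forces $\Ker(\til\omega)\le T$, this gives $[x,y]\in T$, proving that $TC_S(T)/T$ is abelian.

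To verify that $A$ is abelian, I would argue as follows. Any $\beta\in A$ induces an automorphism of $\cale$ whose restriction to $T$ is $\Id_T$, and such an automorphism must equal $\Id_\cale$; hence $\beta$ fixes every object of $\call$, and compatibility with $\delta$ then forces $\beta$ to act trivially on each $[\![P]\!]=\delta_P(P)$. For any morphism $\psi\in\Mor_\call(P,Q)$, the linking-system axiom $\psi\circ\delta_P(x)=\delta_Q(\pi(\psi)(x))\circ\psi$ applied to $\beta(\psi)$ shows $\pi(\beta(\psi))=\pi(\psi)$; since $P\in\cale^c$, there is a unique $z_\psi^\beta\in Z(P)$ with $\beta(\psi)=\psi\circ\delta_P(z_\psi^\beta)$. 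Because $\beta_1$ fixes $\delta_P(Z(P))$ pointwise, a short calculation yields $(\beta_1\beta_2)(\psi)=\psi\circ\delta_P(z_\psi^{\beta_1}z_\psi^{\beta_2})$, and the commutativity of $Z(P)$ then forces $\beta_1\beta_2=\beta_2\beta_1$.

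For the $p$-odd statement, the cocycles $(z_\psi^\beta)_\psi$ realize $A$ as $Z^1_{\calo(\cale^c)}(\calz_\cale)$, while the inner automorphisms $c_{\delta_T(z)}$ for $z\in Z(T)$ give the coboundaries, identifying the image of $A$ in $\Out\typ(\call)$ with $\lim^1_{\calo(\cale^c)}(\calz_\cale)$. Since $C_S(T)/Z(T)\cong TC_S(T)/T$ embeds into this image via $\omega$, and since this higher limit vanishes at odd primes by the Broto--Levi--Oliver uniqueness theorem for centric linking systems, we conclude $C_S(T)=Z(T)\le T$. The main obstacle is precisely this last step: the abelian part follows formally from the linking-system axioms, whereas the $p$-odd conclusion relies on the nontrivial vanishing of $\lim^1$ of the center functor at odd primes.
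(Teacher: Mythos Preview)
Your argument is correct and follows the same route as the paper, though you reprove by hand what the paper obtains by citation. The paper composes $\omega$ with the natural map $\mu_\call\colon\Out\typ(\call)\to\Out(T,\cale)$, observes that $TC_S(T)/T=\Ker(\mu_\call\circ\omega)$ and hence embeds in $\Ker(\mu_\call)$, and then cites \cite[Proposition~III.5.12]{AKO} for the fact that $\Ker(\mu_\call)$ is abelian, together with \cite{O-Ch} and \cite{GLynd} for the injectivity of $\mu_\call$ when $p$ is odd. Your group $A$ is precisely the preimage of $\Ker(\mu_\call)$ in $\Aut\typ^I(\call)$, and your cocycle calculation reproduces the proof in \cite[III.5.12]{AKO} identifying $\Ker(\mu_\call)$ with $\higherlim{}{1}(\calz_\cale)$; so the two proofs differ only in packaging. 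One correction of attribution: the vanishing of this higher limit is not a Broto--Levi--Oliver theorem---they conjectured but did not prove uniqueness of linking systems in general---but rather a consequence of Chermak's theorem; the paper accordingly cites Oliver's obstruction-theoretic version \cite{O-Ch} and the Glauberman--Lynd argument \cite{GLynd}.
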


\begin{proof} Let $\call$ be a centric linking system associated to 
$\cale$. Consider the homomorphisms
	\[ S/T \Right5{\omega} \Out\typ(\call) \Right5{\mu=\mu_\call} 
	\Out(T,\cale)\le \Aut(T)/\Aut_\cale(T) \]
where $\omega$ is injective by Proposition \ref{S/T<Out(L)} and 
$\mu(\omega(xT))=[c_x]$ for $x\in S$. Thus 
$TC_S(T)/T=\Ker(\mu\circ\omega)$ injects into $\Ker(\mu)$. 
In particular, $C_S(T)\le T$ if $\mu$ is injective, and this always 
holds if $p$ is odd by \cite[Theorem C]{O-Ch} and \cite{GLynd}. Otherwise, 
$TC_S(T)/T$ is abelian since $\Ker(\mu)$ is abelian (see 
\cite[Proposition III.5.12]{AKO}). 
\end{proof}

We are now ready to prove our main result, which says that under 
appropriate conditions on $\cale\nsg\calf$, $\calf$ reduces down to $\cale$ 
in the sense which we have been studying.

\begin{Thm} \label{solv}
Let $\cale\nsg\calf$ be saturated fusion systems over $T\nsg S$ such that 
$C_S(\cale)\le T$. Assume either
\begin{enuma} 
\item $\autf(T)/\Aut_\cale(T)$ is $p$-solvable; or

\item $\Out(T,\cale)$ is $p$-solvable.

\end{enuma}
Then $\calf^\infty=\cale^\infty$. 
\end{Thm}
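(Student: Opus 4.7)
My plan proceeds in four stages, with the main effort concentrated in the $p = 2$ case.

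First, I will reduce case (b) to case (a). The invariance clause of Definition \ref{d:E<|F} for $\cale \nsg \calf$ says every $\alpha \in \autf(T)$ is fusion-preserving with respect to $\cale$, so $\autf(T) \le \Aut(T,\cale)$. Dividing by $\Aut_\cale(T)$ yields an injection $\autf(T)/\Aut_\cale(T) \hookrightarrow \Aut(T,\cale)/\Aut_\cale(T) = \Out(T,\cale)$, so $p$-solvability of $\Out(T,\cale)$ passes to the subgroup $\autf(T)/\Aut_\cale(T)$. Thus I may assume (a) holds.

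Second, I will apply Theorem \ref{t:p-solv} to produce a normal saturated subsystem $\calf_0 \nsg \calf$ over $S_0 := TC_S(T)$, satisfying $\cale \le \calf_0$, $(\calf_0)^\infty = \calf^\infty$, and $\Aut_{\calf_0}(T) = \Aut_\cale(T)$. It now suffices to prove $(\calf_0)^\infty = \cale^\infty$, and by Corollary \ref{S/T-abel} the quotient $S_0/T$ is abelian and trivial when $p$ is odd.

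Third, I will exploit the Frattini condition for $\cale \nsg \calf_0$: combined with $\Aut_{\calf_0}(T) = \Aut_\cale(T)$, it forces $\Hom_{\calf_0}(P,Q) = \Hom_\cale(P,Q)$ for all $P,Q \le T$, since any $\varphi \in \Hom_{\calf_0}(P,T)$ factors as $\alpha \circ \varphi_0$ with $\alpha \in \Aut_\cale(T)$ and $\varphi_0 \in \Hom_\cale(P,T)$, so $\varphi \in \Mor(\cale)$. In particular $\Aut_{\calf_0}(P) = \Aut_\cale(P)$ for every $P \le T$. When $p$ is odd we have $S_0 = T$, so $\calf_0$ already lives over $T$; the identity on Hom-sets then gives $\calf_0 = \cale$, closing this case.

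Finally, for $p = 2$, my goal is to show that $\cale$ has $p$-power index in $\calf_0$; the one-step chain $\calf_0 > \cale$ then feeds into Lemma \ref{unique-Finfty} to deliver $(\calf_0)^\infty = \cale^\infty$. Of the two defining conditions for $p$-power index, the condition $\Aut_\cale(P) \supseteq O^p(\Aut_{\calf_0}(P))$ for $P \le T$ is automatic from Step 3; the remaining condition is $\hyp(\calf_0) \le T$. By Lemma \ref{l:O^p(F)}(a) together with the strong closure of $T$, this reduces to $\hyp(\calf_0/T) = 1$ for the quotient system over the abelian $p$-group $S_0/T$. My plan is to combine Alperin's fusion theorem (Theorem \ref{AFT}) with the observation that $[S_0,S_0] \le T$ (a consequence of $S_0/T$ being abelian): morphisms in $\cale$ preserve $T$-cosets trivially since they stay inside $T$, while conjugation by $s \in S_0$ moves each $x$ to $x \cdot [s^{-1}, x^{-1}]^{-1}$ with commutator in $T$. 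The main obstacle is arguing that every morphism of $\calf_0$ decomposes as a composite of $\cale$-morphisms and such $S_0$-conjugations; this amounts to identifying $\calf_0$ with Aschbacher's extension $S_0\cale$ from \cite[Chapter 8]{A-gfit}, which by construction has $\cale \ge O^p(\calf_0)$, so invoking that structural result is the cleanest route to completing the proof.
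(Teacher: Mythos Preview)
Your first two steps match the paper's reduction via Theorem \ref{t:p-solv} and Corollary \ref{S/T-abel}. In step 3 you invoke the Frattini condition ``for $\cale\nsg\calf_0$'', but Theorem \ref{t:p-solv} only gives $\cale\le\calf_0$; that $\cale$ is \emph{normal} in the intermediate system $\calf_0$ (so that the Frattini decomposition produces $\alpha\in\Aut_{\calf_0}(T)$ rather than merely $\alpha\in\autf(T)$) needs its own argument.

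The substantive gap is step 4. Knowing $\Aut_{\calf_0}(T)=\Aut_\cale(T)$, and even that $\calf_0$ restricted to subgroups of $T$ coincides with $\cale$, does \emph{not} force $\hyp(\calf_0)\le T$. An automorphism $\varphi\in\Aut_{\calf_0}(S_0)$ necessarily restricts to $\Aut_\cale(T)$ on $T$, but it may still induce a nontrivial $p'$-automorphism of $S_0/T\cong C_S(T)/Z(T)$, and nothing you have established excludes this. Your proposed fix---identifying $\calf_0$ with Aschbacher's $S_0\cale$---is precisely the missing point: $S_0\cale$ is the \emph{smallest} saturated system over $S_0$ in which $\cale$ is normal, and there is no uniqueness theorem forcing every such system with automizer $\Aut_\cale(T)$ to equal it. You flag this as ``the main obstacle'' but do not resolve it; appealing to \cite[Chapter 8]{A-gfit} only gives $S_0\cale\le\calf_0$, not equality.

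The paper avoids this identification entirely by inserting one more index-prime-to-$p$ step. After reducing to $S=TC_S(T)$ and $\autf(T)=\Aut_\cale(T)$, it observes that for $P\ge C_S(T)$ one has $PT=S$, so the quotient functor to $\calf/T$ gives a map $\chi\colon\Mor(\calf^*)\to\Aut_{\calf/T}(S/T)$; the target is a $p'$-group because $S/T$ is abelian (so its Sylow $\Aut_{S/T}(S/T)=1$). Lemma \ref{Ker(chi)} applied to this $\chi$ produces a normal subsystem $\calf_0'\nsg\calf$ of index prime to $p$ with $\Aut_{\calf_0'/T}(S/T)=1$. Only then is $\calf_0'/T$ the trivial fusion system on the abelian group $S/T$, giving $\hyp(\calf_0')\le T$ via Lemma \ref{l:O^p(F)}(a); Proposition \ref{O^p(F)} and Lemma \ref{unique-Finfty} finish the argument. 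Your outline is missing exactly this passage through Lemma \ref{Ker(chi)}.
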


\begin{proof} Since 
	\[ \autf(T)/\Aut_\cale(T) \le \Aut(T,\cale)/\Aut_\cale(T) = 
	\Out(T,\cale), \]
$\autf(T)/\Aut_\cale(T)$ is $p$-solvable if $\Out(T,\cale)$ is 
$p$-solvable, and thus (b) implies (a). So from now on, we assume 
$\autf(T)/\Aut_\cale(T)$ is $p$-solvable. By Theorem \ref{t:p-solv}, it 
suffices to prove this when $S=TC_S(T)$ and $\autf(T)=\Aut_\cale(T)$. So by 
Corollary \ref{S/T-abel}, $S/T$ is abelian. 

Set $\calh=\{P\le S\,|\,P\ge C_S(T)\}$. If $P\in\calf^c$ and 
$P\notin\calh$, then $PC_S(T)>P$, so $N_{PC_S(T)}(P)>P$, and there is $x\in 
N_{C_S(T)}(P)\sminus P$. Then $c_x\in\Aut_S(P)$ induces the identity on 
$P\cap T$ and on $P/(P\cap T)$, so $c_x\in O_p(\autf(P))$. Also, 
$c_x\notin\Inn(P)$ since $x\notin P$ and $P\in\calf^c$, so 
$1\ne[c_x]\in\Out_S(P)\cap O_p(\outf(P))$.

Let $(\Psi,\5\Psi)\:\calf\too\calf/T$ be the morphism of fusion systems 
which sends $P$ to $PT/T$ and $\varphi\in\Mor(\calf)$ to the induced 
homomorphism between quotient groups \cite[Theorem II.5.12]{AKO}. Let 
$\calf^*\subseteq\calf^c$ be the full subcategory with objects in 
$\calh\cap\calf^c$. By definition, for each $P\in\calh$, $PT=TC_S(T)=S$ 
since $P\ge C_S(T)$. Define $\chi\:\Mor(\calf^*)\too\Aut_{\calf/T}(S/T)$ by 
sending $\varphi\in\Mor(\calf^*)$ to 
$\5\Psi(\varphi)\in\Aut_{\calf/T}(S/T)$. This clearly satisfies 
conditions (iii)--(v) in Lemma \ref{Ker(chi)}. Since we just checked 
condition (ii) in the lemma, and (i) is clear, we conclude that there is a 
normal fusion subsystem $\calf_0\nsg\calf$ containing $O^{p'}(\calf)$ such 
that $\Aut_{\calf_0}(S)=\Ker(\chi|_{\autf(S)})$ and 
hence $\Aut_{\calf_0/T}(S/T)=1$. 

Since $S/T$ is abelian and $\Aut_{\calf_0/T}(S/T)=1$, we have 
$\calf_0/T=\calf_{S/T}(S/T)$. So $\hyp(\calf_0)\le T$ by Lemma 
\ref{l:O^p(F)}(a). By \cite[Theorem I.7.4]{AKO}, there is a unique fusion 
subsystem $\calf_1\nsg\calf_0$ over $T$ of $p$-power index. Also, 
$\cale\nsg\calf_1$ by Proposition \ref{O^p(F)}, so $\cale$ has index prime 
to $p$ in $\calf_1$ by \cite[Lemma 1.26]{AOV1}. Thue 
$\calf^\infty=(\calf_0)^\infty=(\calf_1)^\infty=\cale^\infty$ by Lemma 
\ref{unique-Finfty}.
\end{proof}

To explain the motivation for Theorem \ref{solv}, we recall some 
definitions in \cite{AOV1}. When $\calf=\calf_S(G)$ and 
$\call=\call_S^c(G)$ for some finite group $G$ with $S\in\sylp{G}$, there 
is a natural homomorphism
	\[ \Out(G) \cong N_{\Aut(G)}(S)\big/\Aut_{N_G(S)}(G) 
	\Right5{\kappa_G} \Out\typ(\call), \]
defined by sending $\alpha\in N_{\Aut(G)}(S)$ to the automorphism of 
$\call$ induced by $\alpha$. See \cite[\S\,2.2]{AOV1} for more details. The 
fusion system $\calf$ is \emph{tamely realized} by $G$ if 
$\calf\cong\calf_S(G)$ and $\kappa_G$ is split surjective, and $\calf$ is 
tame if it is tamely realized by some finite group. 

Finally, a saturated fusion system $\calf$ is \emph{reduced} if 
$O_p(\calf)=1$ and $O^p(\calf)=\calf=O^{p'}(\calf)$. If $\calf$ is any 
fusion system, and $Q=O_p(\calf)$, set 
$\red(\calf)=(C_\calf(Q)/Z(Q))^\infty$: the \emph{reduction} of $\calf$. By 
\cite[Proposition 2.2]{AOV1}, $\red(\calf)$ is reduced for any saturated 
fusion system $\calf$. By \cite[Theorem A]{AOV1}, $\calf$ is tame if its 
reduction is tame.

\begin{Cor} \label{c:tame-solv}
Let $\cale\nsg\calf$ be saturated fusion systems over $T\nsg S$, where 
$C_S(\cale)\le T$, and where $\cale$ is simple and is tamely realized by a 
known simple group $K$. Then $\calf^\infty=\cale$, and $\calf$ is tamely 
realized by an extension of $K$. 
\end{Cor}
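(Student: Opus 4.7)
The corollary has two conclusions to establish, so my plan splits accordingly. First I would verify that $\calf^\infty=\cale$ by applying Theorem~\ref{solv}(b). Simplicity of $\cale$ says it has no proper nontrivial normal subsystems, hence $O^p(\cale)=O^{p'}(\cale)=\cale$, so $\cale^\infty=\cale$ directly. For the hypothesis of Theorem~\ref{solv}(b), I would argue that $\Out(T,\cale)$ is solvable (in particular $p$-solvable): tame realization of $\cale$ by $K$ provides a split surjection $\kappa_K\:\Out(K)\surj\Out\typ(\call)$, where $\call$ is a centric linking system of $\cale$; Schreier's conjecture (available for the known simple group $K$) forces $\Out(K)$ solvable, and hence $\Out\typ(\call)$ is solvable. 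The natural map $\mu_\call\:\Out\typ(\call)\too\Out(T,\cale)$, together with the Chermak--Oliver identification already invoked in the proof of Corollary~\ref{S/T-abel}, then transports solvability to $\Out(T,\cale)$, so Theorem~\ref{solv}(b) gives $\calf^\infty=\cale^\infty=\cale$.

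For the second claim, I would unwind the proofs of Theorem~\ref{solv} and Theorem~\ref{t:p-solv} to extract an explicit chain
\[ \calf=\calf_0>\calf_1>\cdots>\calf_m=\cale, \]
with each $\calf_{i+1}\nsg\calf_i$ of $p$-power index or of index prime to $p$, and then climb this chain starting from the group $K$ that tamely realizes $\cale$. At each step I would appeal to the results of \cite{AOV1} showing that tame realizability lifts under the relevant normal extensions, producing at each stage a finite group tamely realizing $\calf_i$; iterating yields a tame realization of $\calf=\calf_0$ by a group that extends $K$ by construction.

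The main obstacle is Part~2: Part~1 is a direct combination of Schreier's conjecture with Theorem~\ref{solv}, whereas Part~2 requires careful use of the tame-extension machinery of \cite{AOV1} to ensure both that the groups constructed at each level are genuine extensions of $K$ (rather than unrelated groups each realizing some intermediate $\calf_i$) and that tameness propagates cleanly through each reverse $O^p$ or $O^{p'}$ step. A subsidiary technical point is making rigorous the identification $\Out\typ(\call)\surj\Out(T,\cale)$ needed to bridge the tame-realization data (phrased in terms of linking systems) and the hypothesis on $\Out(T,\cale)$ required by Theorem~\ref{solv}(b).
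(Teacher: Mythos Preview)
Your proposal is correct and follows essentially the same route as the paper. The paper likewise deduces solvability of $\Out(T,\cale)$ from Schreier's conjecture via the surjections $\Out(K)\xrightarrow{\kappa_K}\Out\typ(\call)\xrightarrow{\mu_\call}\Out(T,\cale)$ (citing \cite{O-Ch} and \cite{GLynd} for the surjectivity of $\mu_\call$), applies Theorem~\ref{solv} to get $\calf^\infty=\cale^\infty=\cale$, and then invokes the tame-extension results of \cite{AOV1} (specifically Theorem~2.20 and Proposition~2.16) to realize $\calf$ by an extension of $K$. One small remark: the proof of Corollary~\ref{S/T-abel} uses \cite{O-Ch} and \cite{GLynd} for information about $\Ker(\mu_\call)$, whereas here you need \emph{surjectivity} of $\mu_\call$; the same references supply this, but it is a distinct statement from what is quoted there.
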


\begin{proof} By the Schreier conjecture (see \cite[Theorem 7.1.1]{GLS3}), 
$\Out(K)$ is solvable. Set $\call=\call_T^c(K)$. 
Then $\Out\typ(\call)$ is solvable since $\kappa_K$ is 
surjective, and $\Out(T,\cale)$ is solvable since $\mu_\call$ is 
surjective by \cite[Theorem C]{O-Ch} and \cite{GLynd}. Hence 
$\calf^\infty=\cale^\infty$ by Theorem \ref{solv}, 
$\red(\calf)=\cale^\infty=\cale$ since $\cale$ is simple, and so $\calf$ is 
tame by \cite[Theorem 2.20]{AOV1}. More precisely, $\calf$ is tamely 
realized by an extension of $K$ by successive applications of 
\cite[Proposition 2.16]{AOV1}, together with the existence of 
``compatible'' linking systems as made precise in the proof of 
\cite[Theorem 2.20]{AOV1}. 
\end{proof}

One can take this further by stating it in terms of the generalized Fitting 
subsystem $F^*(\calf)$ of a saturated fusion system $\calf$ 
\cite[9.9]{A-gfit}. 

\begin{Cor} \label{c:F*(F)<|F}
Let $\calf$ be a saturated fusion system. Assume that 
$F^*(\calf)=O_p(\calf)\cale$ (a central product), where $\cale\nsg\calf$ is 
quasisimple, and where $\cale/Z(\cale)$ is tamely realized by a known 
simple group $K$. Then $\red(\calf)\cong\cale/Z(\cale)$, and $\calf$ is 
tamely realized by a finite group $G$ such that $F^*(G)=O_p(G)H$, 
where $H$ is quasisimple and $H/Z(H)\cong K$.
\end{Cor}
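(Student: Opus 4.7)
The plan is to apply Corollary \ref{c:tame-solv} to a normal pair extracted from the reduction of $\calf$. Set $Q=O_p(\calf)$ and $\calf'=C_\calf(Q)/Z(Q)$; by \cite[Proposition 2.2]{AOV1} this is a saturated fusion system, and $\red(\calf)=(\calf')^\infty$ by definition.

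First I would identify the image $\4\cale$ of $\cale$ in $\calf'$. Since $F^*(\calf)=Q\cale$ is a central product, $\cale\le C_\calf(Q)$, so $\cale$ does project. Because $\cale$ is quasisimple, $Z(\cale)$ is a $p$-subgroup of the Sylow $T_0$ of $\cale$, and because $\cale\nsg\calf$, $O_p(\cale)\le Q$, so $Z(\cale)\le Q$. The central product structure forces $\cale\cap Q\le Z(\cale)\cap Z(Q)$, hence $\cale\cap Z(Q)=Z(\cale)$, whence $\4\cale\cong\cale/Z(\cale)$. This is simple by hypothesis, and is normal in $\calf'$.

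Next I would verify the centralizer hypothesis of Corollary \ref{c:tame-solv} for $\4\cale\nsg\calf'$. The relevant input is Aschbacher's identity $C_\calf(F^*(\calf))=Z(F^*(\calf))$ from \cite{A-gfit}: at Sylow level this yields $C_S(Q)\cap C_S(\cale)\le Z(F^*(\calf))=Z(Q)$ (using the central product to compute the center). Consequently $C_{S'}(\4\cale)$ is contained in the image of $Z(Q)T_0$ in $S'=C_S(Q)/Z(Q)$, which is exactly the Sylow $T'$ of $\4\cale$.

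With both hypotheses verified, Corollary \ref{c:tame-solv} applies to $\4\cale\nsg\calf'$: since $\4\cale\cong\cale/Z(\cale)$ is simple and tamely realized by the known simple group $K$, we obtain $(\calf')^\infty=\4\cale\cong\cale/Z(\cale)$. This proves the first statement $\red(\calf)\cong\cale/Z(\cale)$, and also tells us that $\calf'$ is tamely realized by an extension of $K$. By \cite[Theorem 2.20]{AOV1}, tameness of $\red(\calf)$ implies tameness of $\calf$. The specific structural conclusion $F^*(G)=O_p(G)H$ with $H$ quasisimple and $H/Z(H)\cong K$ is then obtained by mirroring the final step of the proof of Corollary \ref{c:tame-solv}: iterating \cite[Proposition 2.16]{AOV1} starting from the tame realization of $\red(\calf)$ and working upward, at each stage keeping track of how the generalized Fitting subsystem lifts to the realizing group. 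The main obstacle is this last bookkeeping---checking that the central product decomposition $F^*(\calf)=Q\cale$ is respected at every iterated extension step, so that the realizing group retains a quasisimple part $H$ with $H/Z(H)\cong K$ that combines with $O_p(G)$ as a central product in $F^*(G)$.
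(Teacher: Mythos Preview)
Your proposal is correct and follows essentially the same route as the paper: set $Q=O_p(\calf)$, pass to $\calf'=C_\calf(Q)/Z(Q)$, identify the image of $\cale$ there as a simple normal subsystem isomorphic to $\cale/Z(\cale)$, and apply Corollary~\ref{c:tame-solv}. The paper phrases the normal subsystem as $C_{F^*(\calf)}(Q)/Z(Q)$ rather than as the image $\4\cale$ of $\cale$, but these are the same object; the paper simply asserts that the hypotheses of Corollary~\ref{c:tame-solv} hold, while you spell out why $\4\cale\cong\cale/Z(\cale)$ and why $C_{S'}(\4\cale)\le T'$ via $C_\calf(F^*(\calf))=Z(F^*(\calf))$. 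For the final structural claim about $G$, both you and the paper defer to \cite[Theorem 2.20]{AOV1} and its proof (the iterated use of \cite[Proposition 2.16]{AOV1}); your honest flagging of the bookkeeping there matches the paper's appeal to ``and its proof.''
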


\begin{proof} Set $Q=O_p(\calf)$. Then  $C_{F^*(\calf)}(Q)/Z(Q)\cong 
\cale/Z(\cale)$ is simple, and hence the pair $C_{F^*(\calf)}(Q)/Z(Q)\nsg 
C_\calf(Q)/Z(Q)$ satisfies the hypotheses of Corollary \ref{c:tame-solv}. 
So by that corollary, $\red(\calf)\cong\cale/Z(\cale)$, and 
$C_\calf(Q)/Z(Q)$ is tamely realized by an extension of $K$. Together with 
\cite[Theorem 2.20]{AOV1} (and its proof), this implies that $\calf$ itself 
is tamely realized by a finite group $G$ of the form described above.
\end{proof}


\end{document}

\newpage

For now, the following lemma and proposition are not needed in the above 
proofs.

\begin{Lem} \label{TCS(T)<|F}
Let $\calf$ be a saturated fusion system over a $p$-group $S$, and let 
$T\le U\le S$ be such that $T\nsg\calf$, $U$ is strongly closed in 
$\calf$, $U\le TC_S(T)$, and $U/T$ is abelian. Then $U\nsg\calf$.
\end{Lem}

\begin{proof} Define inductively $T=T_0>T_1>\cdots>T_m=1$ by setting 
$T_i=[T,T_{i-1}]$ for $i\ge0$. Each $T_i$ is strongly closed in $\calf$ 
since $T\nsg\calf$, and $T_i=[U,T_{i-1}]$ since $U\le TC_S(T)$. So 
$U\nsg\calf$ by \cite[Proposition I.4.6]{AKO}. 
\end{proof}

\mynote{Define $N_\calf(T)$.}

\begin{Prop} \label{p:NF(T)-constr}
Let $\cale\nsg\calf$ be a pair of saturated fusion systems over the 
$p$-groups $T\nsg S$ such that $C_S(\cale)\le T$. Then $TC_S(T)\nsg 
N_\calf(T)$, and hence $N_\calf(T)$ is constrained.
\end{Prop}

\begin{proof} By Lemma \ref{l:O^p(F)}(b), $TC_S(T)$ is strongly closed in 
$N_\calf(T)$. By Corollary \ref{S/T-abel}, $TC_S(T)/T$ is abelian. Since 
$T\nsg N_\calf(T)$, $TC_S(T)\nsg N_\calf(T)$ by Lemma \ref{TCS(T)<|F}. 
\end{proof}

I think the following conjecture must be proven if one wants to really 
understand extensions of (products of) simple fusion systems.

\begin{Conj} \label{cj1}
Let $\calf$ be an almost simple fusion system, over a $p$-group $S$. (More 
generally, just assume $O_p(\calf)=1$.) Set $\cale=F^*(\calf)$. Thus 
$\cale$ is simple, or a product of simple fusion systems, depending on 
the assumption. Then there exist a linking system $\call$ 
associated to $\cale$, and a transporter system $\calt$ associated to 
$\calf$, such that $\call\nsg\calt$.
\end{Conj}

Assume Conjecture \ref{cj1} holds for $\calf$, and set 
$\Gamma_0=\Aut_\call(T)$ and $\Gamma=\Aut_\calt(T)$ ($T$ is the Sylow 
subgroup of $\cale$). Assume $\Gamma/\Gamma_0$ is $p$-solvable, 
and let $\Gamma_0\nsg\Gamma_1\nsg\cdots\nsg\Gamma_m=\Gamma$ be a sequence 
of subgroups, all normal in $\Gamma$, such that $\Gamma_i/\Gamma_{i-1}$ is 
a $p$-group or a $p'$-group for each $i$. By Theorem 3.7 (i.e., by the main 
theorem in \cite{O-ext}, applied with $\Gamma_i$ in the role of $\Gamma$), 
there is a corresponding sequence 
$\cale\nsg\calf_1\nsg\cdots\nsg\calf_m=\calf$ of normal fusion subsystems 
of $\calf$ such that for each $i$, $\calf_{i-1}$ has $p$-power index or 
index prime to $p$ in $\calf_i$. Thus $\cale=\calf^\infty$ in this case.

The next conjecture might imply Conjecture \ref{cj1}, with the help of 
Theorem 3.7, but only if we have some 
means of proving that the extension fusion system constructed using that 
theorem is isomorphic to $\calf$. Conjecture \ref{cj2} does, however, show 
more clearly exactly what needs to be proven (and also what is important in 
the potential applications). If $\call\nsg\calt$ are given by Conjecture 
\ref{cj1}, then $\Gamma=\Aut_\calt(T)$ in the notation of Conjecture 
\ref{cj2}.

\begin{Conj} \label{cj2}
Let $\calf$ be a saturated fusion system over a $p$-group $S$ such that 
$O_p(\calf)=1$. Set $\cale=F^*(\calf)$, a fusion subsystem over $T\nsg 
S$, and let $\call$ be a centric linking system associated to $\cale$. 
Set $\Gamma_0=\Aut_{\call}(T)$: a model for $N_{\cale}(T)$. Let 
$\Gamma$ be a model for $N_\calf(T)$ (which is constrained by Proposition 
\ref{p:NF(T)-constr}); we can identify $\Gamma_0$ as a 
normal subgroup in $\Gamma$. Then there is a homomorphism $\tau$ from 
$\Gamma$ to $\Aut\typ^I(\call)$ which makes both triangles in the 
following diagram commute:
	\beq \vcenter{\xymatrix@C=40pt@R=30pt{
	\I12\Gamma_0 \ar[r]^-{\conj} \ar[d]_{\incl}
	& \I12\Aut\typ^I(\call) \ar[d]^{(\alpha\mapsto\alpha_{T})} \\
	\I12\Gamma \ar[r]^(.45){\conj} \ar@{-->}[ur]^{\tau} & \I12\Aut(\Gamma_0) 
	\rlap{.}
	}}  \eeq
\end{Conj}

As noted above, Conjecture \ref{cj2} implies Conjecture \ref{cj1} only if 
we can answer the following question.

\end{document}

